\newtheorem{thm}{Theorem}[section]
\newtheorem{cor}[thm]{Corollary}
\newtheorem{prop}[thm]{Proposition}
\theoremstyle{definition}
\newtheorem{exmp}[thm]{Example}
\theoremstyle{remark}
\newtheorem{rem}[thm]{Remark}
\newcommand{\N}{\mathbf N}
\newcommand{\Z}{\mathbf Z}
\newcommand{\Q}{\mathbf Q}
\newcommand{\C}{\mathbf C}
\newcommand{\A}{\mathbf A}
\newcommand{\Gal}{\operatorname{Gal}}
\newcommand{\FF}{\mathbf{F}}
\newcommand{\PP}{\mathbf{P}}
\newcommand{\SL}{\operatorname{SL}}
\newcommand{\GL}{\operatorname{GL}}
\renewcommand{\H}{\mathcal{H}}
\renewcommand{\mod}{\operatorname{mod }}
\newcommand{\Hom}{\operatorname{Hom}}
\newcommand{\End}{\operatorname{End}}
\newcommand{\calF}{\mathcal{F}}
\newcommand{\calO}{\mathcal{O}}
\DeclareSymbolFont{cyrletters}{OT2}{wncyr}{m}{n}
\DeclareMathSymbol{\Sha}{\mathalpha}{cyrletters}{"58}
\title{Rigorous expansions of modular forms at CM points, I: Denominators}
\author{Chris Xu}
\date{\today}
\email{chx007@ucsd.edu}
\address{Department of Mathematics, University of California San Diego, La Jolla, California 92093, USA}
\begin{document}

\maketitle
\begin{abstract}
    We describe an algorithm to rigorously compute the power series expansion at a CM point of a weight $2$ cusp form of level coprime to $6$. Our algorithm works by bounding the denominators that appear due to ramification, and without recourse to computing an explicit model of the corresponding modular curve. Our result is the first in a series of papers toward an eventual implementation of equationless Chabauty.

\end{abstract}

\section{Introduction}
Let $X_H$ be a modular curve of some level $H \leq \GL_2(\Z/N\Z)$, let $\tau \in \H$ be a CM point in the upper-half plane, and let $\omega := f(q)\,dq$ be a $1$-form that corresponds to the weight $2$ cusp form $qf(q)$. We will describe how to compute a power series expansion for $f(q)\,dq$ at the point $\tau$. 

Such a computation can be done analytically without too much difficulty: see \cite{voightpowerseries} and \cite{CKL}. But if $\omega$ is defined over $\Z$, and we choose an algebraic uniformizer $t$, then the coefficients of the resulting power series $g(t)\,dt$ will actually lie in a number field $F$. This article is the first part of a two-paper series on how to \emph{rigorously} pin down the coefficients as elements of $F$. As far as we know, none of the literature has ever detailed such an algorithm.

We will implement the algorithm outlined in this paper when the second part, \cite{huangrendellxu}, is released. Eventually we hope to use it for Chabauty computations on modular curves.

\subsection{Future work}
This paper started off as a collaboration with Yongyuan Huang and Isabel Rendell for a larger project. Namely, we wanted to develop a general-purpose Chabauty algorithm for modular curves. During the collaboration, it became clear that in order to make further headway, we needed to find a way to systematically compute power series of cusp forms at possibly non-cuspidal points. The paper here provides my ideas on how to approach some (but not all) aspects of the computation. In a joint work in progress (cf. \cite{huangrendellxu}), we will detail the rest of the algorithm; more precisely, we will explain how to perform a rigorous precision analysis and how to compute cusp forms that correspond to differentials defined over $\Z$.

\subsection{Leitfaden}
This article is split up into the following parts. \cref{section:notationsetup} sets up the notation surrounding the basepoint $\tau$. \cref{section:analyticcomputation} recalls how to analytically compute the power series coefficients. \cref{section:ramificationoverview} describes a strategy to bound the denominators appearing in the power series coefficients in terms of the ramification experienced at $\tau$. \cref{Primes dividing $N$} and \cref{Primes dividing $j_E$ or $j_E - 1728$} give algorithms for computing the necessary denominator bounds. \cref{section:0or1728} briefly describes the changes needed to the algorithm if $j(\tau) \in \{0,1728\}$. Finally, \cref{alg:algintegerrecovery} details how to recover an algebraic integer in a prescribed number field, given approximations to its conjugates.

\subsection{Acknowledgements}
I thank Yongyuan Huang and Isabel Rendell for their fruitful collaboration. I thank Mingjie Chen and Jun Lau for sharing their code for the computations done in \cite{CKL}. I thank Jennifer Balakrishnan, K{\c e}stutis {\v C}esnavi{\v c}ius, Brian Conrad, Maarten Derickx, Sachi Hashimoto, Kiran Kedlaya, Travis Morrison, Bjorn Poonen, Andrew Sutherland and John Voight for helpful conversations.
\section{Notation and setup} \label{section:notationsetup}
\subsection{Modular curves}
Let $H \leq \GL_2(\Z/N\Z)$. For a scheme $S$, let $Y_{H,S}/S$ denote the coarse moduli scheme of elliptic curves $E/S$ that are endowed with an $H$-level structure $[\iota]_H$. Denote $X_{H,S}$ as $Y_{H,S}$ plus cusps. Let $\pi\colon X_{H,S} \to \PP^1_S$ denote the map to $X(1)_S \cong \PP^1_S$ that forgets the level structure.
Fix a Drinfeld basis $(\Z/N\Z)^2 \to E(S)$ of $E$, possibly after passing to an fppf cover. Under this basis we may identify $[\iota]_H$ with a double coset $HgA_E$ of $\GL_2(\Z/N\Z)$; here $A_E$ denotes the group $\underline{\text{Aut}}_E(S)$ of fppf local automorphisms. For every elliptic curve we encounter with an $H$-level structure, assume we have chosen such a basis so that we are free to use the double coset characterization as we please. If we specify a particular basis, we will always use that basis when talking about double cosets. (See e.g. \cite[2.3]{RSZB} for a precise definition of $H$-level structures.) 

Assume always that $-I \in H$ and that $\det\colon H \to (\Z/N\Z)^\times$ is surjective. Write $Y_{H,\Z}$ as just $Y_H$. Finally, assume that $\gcd(6,N) = 1$. This last condition is justified by the following statement which we will use implicitly in our analysis of denominators:
\begin{prop}[{\cite[Prop. 6.4]{Ces16}}]
    For any $\Z[1/\gcd(6,N)]$-scheme $S$, the canonical ``coarse base change'' map $Y_{H,S} \to Y_H \times S$ is an isomorphism.
\end{prop}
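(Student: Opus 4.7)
The plan is to realize $Y_H$ as the coarse moduli space of the Deligne--Mumford stack $\calY_H/\Z$ of elliptic curves with $H$-level structure. Since formation of coarse moduli always commutes with flat base change (Keel--Mori), the content of the proposition lies entirely in the non-flat case. The natural approach is then to apply the theorem of Abramovich--Olsson--Vistoli: the coarse moduli of a \emph{tame} Deligne--Mumford stack commutes with arbitrary base change. It suffices to show that $\calY_H \times \Spec\Z[1/\gcd(6,N)]$ is tame, i.e. that all geometric automorphism groups have order invertible on the base.

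Concretely, at a geometric point $(E, [\iota]_H)$ with $[\iota]_H = HgA_E$, the stabilizer in $\calY_H$ is $A_E \cap g^{-1}Hg$. The assumption $-I \in H \cap A_E$ makes this always contain $\{\pm 1\}$, so $\calY_H$ is a $\mu_2$-gerbe over a rigidification with the same coarse moduli, and the residual stabilizer is a subgroup of $A_E/\{\pm 1\}$, which is trivial unless $j(E) \in \{0, 1728\}$. In residue characteristics $p \notin \{2, 3\}$ this residual stabilizer has order dividing $6$, hence coprime to $p$, so tameness is immediate.

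The main obstacle is residue characteristic $p \in \{2,3\}$. When $p \mid N$, we have $p \mid \gcd(6,N)$ and such characteristics are excluded from the base. The subtlest case is $p \in \{2,3\}$ with $p \nmid N$: here the supersingular point $j = 0$ in $\FF_p$ has $\Aut(E)$ of order $24$ or $12$, and one must show the $H$-level structure cuts the stabilizer down to something of order coprime to $p$. My approach would be to use the faithfulness of the action $A_E \hookrightarrow \GL_2(\Z/N\Z)$ (valid since $p \nmid N$), together with a detailed analysis of how $H$ intersects the Sylow-$p$ subgroup of $\Aut(E)$ for supersingular curves in these characteristics. Alternatively, one could bypass tameness entirely and argue via the presentation $Y_H = Y(N)/\overline{H}$ of $Y_H$ as a finite-group quotient of the fine moduli scheme $Y(N)$ over $\Z[1/N]$, and then track ramification over the remaining primes by hand. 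Either way, this case analysis --- rather than the general stack-theoretic framework --- is where the real work lies.
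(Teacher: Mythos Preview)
The paper does not give a proof of this proposition; it is quoted directly from \v{C}esnavi\v{c}ius \cite{Ces16}. Your outline via tame Deligne--Mumford stacks and the Abramovich--Olsson--Vistoli theorem is the correct framework and cleanly disposes of all primes $p > 3$, as well as those $p \in \{2,3\}$ that divide $N$.

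There is, however, a genuine gap in your handling of the remaining case $p \in \{2,3\}$ with $p \nmid N$. Your first proposal --- show that the $H$-level structure forces the stabilizer at the supersingular point to have order prime to $p$ --- cannot succeed in general. Take $H = \GL_2(\Z/N\Z)$, so that $\calY_H$ is the full modular stack $\calY(1)$: over $\overline{\FF}_2$ the supersingular point has automorphism group of order $24$, so the stack is genuinely non-tame there, and yet coarse base change holds trivially since both sides are the affine $j$-line over any base. Tameness is thus sufficient but not necessary, and no analysis of how $H$ meets the $p$-Sylow of $A_E$ will rescue the AOV approach for arbitrary $H$. Your alternative --- the presentation $Y_H = Y(N)/\bar H$ over $\Z[1/N]$ --- is closer in spirit, but formation of finite-group quotients does not commute with base change in general either (consider $\A^1_{\Z}$ modulo the involution $x \mapsto -x$, reduced mod $2$: the comparison map becomes Frobenius). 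The argument in \cite{Ces16} uses input specific to the geometry of the universal deformation at the supersingular points in characteristics $2$ and $3$; neither of the two general mechanisms you sketch reaches that point.
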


\begin{rem}
    By \cite[Thm 6.12]{rydhalgstack}, $Y_{H,S} \to Y_H \times S$ is a universal homeomorphism for any scheme $S$. Hence the failure of coarse base change is rather mild at worst.
\end{rem}

\subsection{Basepoint}
Choose a CM point $b_\infty := (E,[\iota]_H) \in Y_H(\C)$. The corresponding elliptic curve $E$ has complex multiplication by an order $R := \Z[\tau]$, where $\tau$ is an imaginary quadratic integer. Let $P(T) := T^2 + rT + s$ denote the characteristic polynomial of $\tau$. Without loss of generality, we may assume that $E_\C \cong \C/R$, as all other choices of $E$ will be Galois conjugates. Let $j_E\in \C$ denote the $j$-invariant of $E$, and fix the basis $[\tau/N,1/N]$ for $E[N](\C)$.

By CM theory, $E$ may be defined over the ring class field $F_R$ of $R$. Denote $F_{R, N}\coloneq F_{R}(E[N])$ the field obtained by adjoining all $N$-torsion points of $E$ to $F_R$. Choose an embedding $F_{R,N} \hookrightarrow \C$ and regard $F_{R,N}$ as a subfield of $\C$ this way. Assume that $\tau$ has positive imaginary part under this embedding.  For each prime $p$, fix an embedding $F_{R,N} \hookrightarrow \overline{\Q}_p$, denote $F_{R,N,p}$ the completion of its image, and denote $\mathfrak{p}_N$ the prime of $\calO_{F_{R, N}}$ above $p$ that corresponds to this embedding. Define $\mathfrak{p} := \mathfrak{p}_1$, define $F_{R,p} := F_{R,1,p}$, and let $\pi_{R,p}$ be a uniformizer of $\calO_{F_{R,p}}$. Refer to the Zariski closure of $b_\infty$ in $X_H$ as just $b$. Refer to the mod $\mathfrak{p}_N$ reduction of $b$ as $b_p$.

\subsection{Good reduction models of $E$ at each prime}
Form the model of $E$ given by $$E_0 \colon y^2+xy = x^3 - \frac{36}{j_E-1728}x - \frac{1}{j_E-1728}.$$ Since $\Delta(E_0) = j_E^3/(j_E-1728)^3$, $E_0$ has bad reduction at precisely the primes dividing $j_E(j_E-1728)$. But $E_0$ still has everywhere potentially good reduction because it is CM.

Identify the periods of $E_0(\C)$ with the lattice $\Lambda \subseteq \C$ with respect to the N\'eron differential $dx/(2y+x)$; do this while looping over possible embeddings $F_R \hookrightarrow \C$, until $\Lambda$ can be identified with $\Z\tau+\Z$ after some scaling. The end result is a complex analytic isomorphism $\C/(\Z\tau+\Z) \xrightarrow{\sim} E_0(\C)$. Keep the embedding $F_R \hookrightarrow \C$ we just found.
 
For each rational prime $p$ such that $\mathfrak{p}|j_E(j_E-1728)$, the elliptic curve $E_0$ attains good reduction after passing to the degree $e_p$ tamely ramified extension $F_{R,p,(e_p)} := F_{R,p}\left(\pi_{R,p}^{1/e_p}\right)$ for some $e_p$. By \cite{timandvlad}, we may determine $e_p$ by the Kodaira symbol of $E_0$ at $\mathfrak{p}$ as follows:
\begin{center}
\begin{tabular}{ c|c|c|c|c}
    Kodaira symbol & $II^{(*)}$ & $III^{(*)}$ & $IV^{(*)}$ & $I_0^*$\\
    \hline 
    $e_p$ & 6 & 4 & 3 & 2
\end{tabular}
\end{center}
Denote $E_{\calO,p}$ a good reduction model of $E_0$ defined over $\calO_{F_{R,p,(e_p)}}$, and choose an isomorphism $\iota_p\colon E_0 \cong E_{\calO,p}$ over $F_{R,p,(e_p)}$. Let $E_{\FF_{\mathfrak{p}}}$ denote the special fiber of $E_{\calO,p}$. Let $F_{R,N,p,(e_p)}$ denote a compositum $F_{R,N,p}F_{R,p,(e_p)}$.

If rational prime $p$ is such that $\mathfrak{p}$ does not divide $j_E(j_E-1728)$, then denote $E_{\calO,p}$ simply by the base change $(E_0)_{\calO_{F_{R,p}}}$, and denote $\iota_p$ accordingly.  

\subsection{Level structure}
Let $\iota\colon (\Z/N\Z)^2 \to E[N](\calO_{F_{R, N}})$ represent the $H$-level structure on $E$ corresponding to $b$. If $N = \prod_i p_i^{m_i}$ denotes the prime factorization of $N$, let us make the identification 
$Y(N)(\C)^{an} \cong \GL_2(N) \times_{\SL_2(\Z)} \H$ 
so that the tuple $(g,\tau) \in X(N)(\C)^{an}$ will correspond to the level structure $(E_\tau,\iota)$, where $E_\tau = \C/(\Z+\Z\tau)$ and $\iota\colon (\Z/N\Z)^2 \to E[N](\C)$ is given by $\iota(a,b) = g \cdot \left((a\tau+b)/N\right)$. Denote this $\iota$ as $g \cdot [\tau,1]$ for short. Denote $q_b :=q(b) := \exp(2\pi i \tau) \in \C$.


\section{Computation of power series coefficients} \label{section:analyticcomputation}
Let us first recall our setting (c.f. \cite[Algorithm 3.5 Step 1]{CKL}), which provides a recursive method to compute analytically the coefficients in the power series expansion of a cusp form at a given point on a modular curve. Fix a differential $\omega\in H^0(X_H,\Omega^1)$ with $q$-expansion $f(q)\,dq \in \Z[\zeta_N][[q]]\,dq$. (That is, we presume $\omega$ is defined over $\Z$.) The differential $\omega$ corresponds to the weight $2$ cusp form $qf(q)$. Denoting $j\coloneq j(q)\in q^{-1}\Z[[q]]$ as the $j$-invariant, we would now like to express $f(q)\,dq$ in terms of the parameter $t \coloneq j-j_E$. That is, we must solve for the coefficients $\mathbf{c}\coloneq [c_0,\ldots,c_n]^T$ in the equation $f(q)\,dq = \sum\limits_{\ell \geq 0}^\infty c_\ell t^\ell\,dt =: g(t)\,dt$
for arbitrarily high $n\in\N$. 

First, express both $f(q)\,dq$ and $t$ as power series with respect to the parameter $q-q_b$: that is, let $a_{\ell}=\frac{j^{(\ell)}(q_b)}{\ell!}\text{ and }b_{\ell}=\frac{f^{(\ell)}(q_b)}{\ell!}$, so that we have expansions $$f(q)\,dq = \sum\limits_{\ell \geq 0}^\infty b_\ell (q-q_b)^\ell\,dq \text{ and }
    t = \sum\limits_{\ell \geq 1}^\infty a_\ell (q-q_b)^\ell.$$
We obtain the equality $\sum\limits_{\ell \geq 0}^\infty b_\ell(q-q_b)^\ell\,dq = \sum\limits_{\ell \geq 0}^\infty c_\ell t^\ell$. Substituting our expression for $t$ into the right hand side of this equality and then comparing coefficients on both sides, we arrive at the following characterization of $\mathbf{c}$:
\begin{prop}
    Let $\mathbf{b}$ denote the column vector $[b_0, \dots, b_n]^T$. We have $\mathbf{c} = M^{-1}\mathbf{b}$, where $M$ is the $0$-indexed lower triangular matrix whose entries satisfy the recurrence relation
    \begin{align*}
        M_{\ell, j} &= \begin{cases}
            (\ell+1)a_{\ell+1} & j=0 \\
            a_1M_{\ell-1,j-1} + a_2M_{\ell-2,j-1} + \cdots + a_{\ell-j+1}M_{j-1,j-1} & 1 \leq j \leq \ell
        \end{cases}
    \end{align*}
    for all $0 \leq \ell \leq n$. Moreover, the coefficients $c_\ell$ all lie in $F_{R,N}$.
\end{prop}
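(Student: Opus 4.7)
The recurrence for $M$ reduces to a coefficient comparison. From $f(q)\,dq = g(t)\,dt$ and $dt = (dt/dq)\,dq$, matching coefficients of $dq$ gives
\[
\sum_{\ell} b_\ell (q-q_b)^\ell \;=\; \Bigl(\sum_j c_j t^j\Bigr) \sum_{k \geq 1} k a_k (q-q_b)^{k-1}.
\]
Let $T_{m,j}$ denote the coefficient of $(q-q_b)^m$ in $t^j$; since $a_0 = 0$, one has $T_{m,j} = 0$ for $m < j$. Reading off the coefficient of $(q-q_b)^\ell$ on both sides yields $b_\ell = \sum_j M_{\ell, j}\, c_j$ with $M_{\ell,j} := \sum_{k \geq 1} k a_k\, T_{\ell-k+1, j}$. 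The vanishing $T_{\ell-k+1, j} = 0$ for $\ell - k + 1 < j$ forces lower-triangularity. The base case $j = 0$ is $T_{m,0} = \delta_{m,0}$, giving $M_{\ell,0} = (\ell+1)a_{\ell+1}$. For $j \geq 1$, $t^j = t \cdot t^{j-1}$ gives $T_{m,j} = \sum_{k' \geq 1} a_{k'}\, T_{m-k',j-1}$; substituting into the definition of $M_{\ell, j}$ and swapping the order of summation, the inner sum collapses to $M_{\ell - k', j-1}$, and truncating via lower-triangularity gives the stated formula.

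For the assertion $c_\ell \in F_{R,N}$, I would argue algebraically rather than by tracking the recursion. Under the embedding $F_{R,N} \hookrightarrow \C$ fixed in \cref{section:notationsetup}, the point $b_\infty$ is an $F_{R,N}$-rational point of $X_H$, hence a closed point of $X_{H,F_{R,N}}$ whose residue field is contained in $F_{R,N}$. Assuming $j_E \notin \{0, 1728\}$ (the exceptional cases are the subject of \cref{section:0or1728}), the projection $\pi\colon X_H \to X(1)$ is \'etale at $b$ because the only source of ramification is extra automorphisms of the underlying elliptic curve and $-I \in H$. Thus $t = j - j_E$ is a uniformizer at $b$, and the completed stalk of $X_{H,F_{R,N}}$ at $b$ is $F_{R,N}[[t]]$. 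Since $\omega$ is defined over $\Z$ (hence over $F_{R,N}$) and regular at $b$, its image in the completion lies in $F_{R,N}[[t]]\,dt$, so all $c_\ell$ lie in $F_{R,N}$.

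The main subtlety is reconciling the analytically computed $g(t)$ with the algebraic one inside the completed local ring. This is handled by observing that both expansions satisfy the same identity $f(q)\,dq = g(t)\,dt$ modulo every power of $(q-q_b)$, and power series with prescribed coefficient comparisons are unique. The bulk of the proof is therefore clerical: the recurrence is forced by bookkeeping, and the field-of-definition claim follows as soon as one recognizes $t$ as a uniformizer at an $F_{R,N}$-rational point.
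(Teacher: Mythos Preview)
Your argument is correct and matches the paper's approach: the paper states the proposition immediately after the sentence ``Substituting our expression for $t$ into the right hand side of this equality and then comparing coefficients on both sides, we arrive at the following characterization of $\mathbf{c}$,'' with no further proof, so the recurrence is meant to be exactly the coefficient comparison you carry out. Your derivation via $M_{\ell,j}=\sum_{k\geq 1}k\,a_k\,T_{\ell-k+1,j}$ and the convolution identity $T_{m,j}=\sum_{k'}a_{k'}T_{m-k',j-1}$ is the correct unpacking of that sentence.

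For the claim $c_\ell\in F_{R,N}$, the paper gives no argument at all; your algebraic justification (that $b_\infty$ is $F_{R,N}$-rational, $t=j-j_E$ is a uniformizer there when $j_E\notin\{0,1728\}$ since $-I\in H$ kills the only possible ramification, and $\omega$ is defined over $\Z$, hence its expansion in the completed local ring $F_{R,N}[[t]]$ has coefficients in $F_{R,N}$) is the natural one and is consistent with how the rest of the paper treats the point $b$. One small notational quibble: when you write ``$\pi$ is \'etale at $b$'' you mean at the $F_{R,N}$-point $b_\infty$ on $X_{H,F_{R,N}}$, not at the integral closure $b\subset X_H$ over $\Z$ (where \'etaleness can fail, which is precisely the subject of the later sections); your next sentence makes clear this is what you intend.
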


\section{Ramification as the source of denominators} \label{section:ramificationoverview}
Since $f(q)$ corresponds to a differential defined over $\Z$, one would a priori expect the coefficients $c_\ell$ of $g(t)$ to lie in $\calO_{F_{R,N}}$. Unfortunately, this does not take into account the fact that the map $\pi\colon Y_H\to\A^1$ may be ramified below various reductions $b_p$ of our point $b$. In the next few sections, we explain how to construct a product of rational exponent prime powers $C := \prod_i p_i^{r_i} \in \overline{\Z}$ such that the substitution $u:=t/C$ guarantees $g(t)\,dt =: \tilde{g}(u)\,du\in\overline{\Z}[[u]]\,du$. If we denote $C^{[n]}$ by $C^{[n]} := \prod_i p_i^{\lceil n\cdot r_i \rceil}$ for each $n \geq 1$, we will in fact have $c_{\ell} \in \frac{1}{C^{[\ell+1]}}\calO_{F_{R,N}}$. 

Let $\{\sigma_1, \dots, \sigma_d\}$ denote the elements of $\Gal(F_{R,N}/K)$. Repeating the computations in \cref{section:analyticcomputation} for all points $\sigma_j(\tau_b) \in \H$, we get for each $\ell$ a list $$\mathbf{c}_\ell := [c_\ell C^{[\ell+1]}, \sigma_2(c_\ell) C^{[\ell+1]}, \dots, \sigma_d(c_\ell) C^{[\ell+1]}].$$ Applying \cref{alg:algintegerrecovery} to each $\mathbf{c}_\ell$ allows us to rigorously pin down $c_\ell C^{[\ell+1]}$ as an algebraic integer in $F_{R,N}$ and ergo determine $c_\ell \in F_{R,N}$. But to do this, we will need to first compute $C$.
\begin{rem}
    Because we are working with an $H$-level structure and not full level $N$ structure, the point $b_\infty$ will actually be defined over the smaller subfield $F_{R,N}^{\Gal(F_{R,N}/K) \cap H}$. So we do not have as many points $\sigma_j(\tau_b)$ that we need to repeat the \cref{section:analyticcomputation} computations for.
\end{rem}

\subsection{Ramification scenarios}
Let us first explicate how ramification over the $j$-line occurs. Recall that the ramification points of $Y_{H,\C}$ are precisely the tuples $(E, [\iota]_H)$ such that $j(E) \in \{0,1728\}$, and such that $[\iota]_H =: HgA_E$ satisfies $A_E \nsubseteq g^{-1}Hg$. Passing to integral models, we find the following two scenarios:
\begin{itemize}
    \item Ramification on a horizontal divisor: If $b_\infty$ lies in the same mod $\mathfrak{p}_N$ residue disk as a ramification point of $X_{H,\C}$, then the reduction $b_p$ is ramified over the $j$-line. For this to occur, it is necessary, but not sufficient, to have $\mathfrak{p}|j_E(j_E-1728)$.

    \item Ramification on a vertical divisor: If $b_\infty$ shares the same mod $\mathfrak{p}_N$ residue disk as another point $b' \in X_{H,\C}$ such that $j(b_\infty) = j(b')$, then $b_{p}$ is ramified over the $j$-line. For this to occur, it is necessary, but not sufficient, to have $p|N$.
\end{itemize}
As a result of this discussion, we have the following:
\begin{prop}
    Suppose that $\pi\colon Y_H \to \A^1$ is ramified at $b_p \in Y_H(\FF_{\mathfrak{p}})$. Then $\mathfrak{p}$ divides $Nj_E(j_E-1728)$.
\end{prop}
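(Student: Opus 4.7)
The plan is to prove the contrapositive: assuming $\mathfrak{p} \nmid Nj_E(j_E - 1728)$, show $\pi$ is unramified at $b_p$ by excluding each of the two ramification scenarios stated in the bullets immediately above.

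To exclude horizontal ramification, I would invoke the discriminant formula $\Delta(E_0) = j_E^3/(j_E - 1728)^3$ from \cref{section:notationsetup}. Since $\mathfrak{p} \nmid j_E(j_E - 1728)$, this discriminant is a unit at $\mathfrak{p}$, so $E_0$ itself already has good reduction there, and $E_{\FF_{\mathfrak{p}}}$ has $j$-invariant $\overline{j_E} \in \FF_{\mathfrak{p}}^\times$ with $\overline{j_E} \neq 1728$. The classification of automorphism groups of elliptic curves --- valid in every characteristic, noting that in characteristics $2$ and $3$ one has $1728 \equiv 0$, a case already excluded by $\mathfrak{p} \nmid j_E$ --- then gives $A_{E_{\FF_{\mathfrak{p}}}} = \{\pm I\}$. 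Since $-I \in H$ by hypothesis, $A_{E_{\FF_{\mathfrak{p}}}} \subseteq g^{-1}Hg$ for every $g$, so $b_p$ is not a horizontal ramification point.

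To exclude vertical ramification, the hypothesis $p \nmid N$ forces $E[N]$ to be an \'etale group scheme over $\calO_{F_{R,N,p}}$, so the reduction map on $H$-level structures is injective on each residue disk. Thus no other complex point $b' \in X_H(\C)$ with $j(b') = j(b_\infty)$ can share the mod-$\mathfrak{p}_N$ residue disk of $b_\infty$. Taken together, the two exclusions give the contrapositive.

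The main obstacle is confirming that these two bulleted scenarios are genuinely exhaustive, so that ruling them both out really implies \'etaleness of $\pi$ at $b_p$. A clean justification factors $\pi$ as the coarse realization of a stack composition: the moduli stack of $H$-level elliptic curves maps \'etale-locally to the moduli stack of elliptic curves (since level structures give \'etale covers when $p \nmid N$), which in turn maps to $\A^1$ by the $j$-invariant, \'etale at $E_{\FF_{\mathfrak{p}}}$ exactly when $\Aut(E_{\FF_{\mathfrak{p}}}) = \{\pm I\}$. Invoking \cite[Prop. 6.4]{Ces16} then transfers \'etaleness of the composition from the stack to the coarse space $Y_H$, completing the argument.
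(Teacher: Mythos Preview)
Your proposal is correct and follows the same route as the paper: the proposition there is simply stated ``as a result of this discussion,'' the discussion being precisely the two bullet points (horizontal and vertical ramification) that you flesh out via the contrapositive. Your write-up is in fact more detailed than the paper's, which leaves the exhaustiveness of the two scenarios and the automorphism-group computation implicit; your stack-theoretic paragraph supplies exactly the justification the paper omits.
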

\begin{rem}
    Note that the two scenarios above are not mutually exclusive. In particular, a CM point will tend to have many small prime divisors (cf. \cite{grosszagier}). So it will be common for $N$ to share a factor with $j_E$ or $j_E-1728$, and therefore it is probable for both horizontal and vertical ramification to occur.
\end{rem}
We will work one prime at a time to eliminate any ramification we encounter. Thus, from now on, fix a rational prime $p$ such that either $p|N$ and/or $\mathfrak{p}|j_E(j_E-1728)$.
\subsection{Eliminating ramification: the smooth case}\label{subsec:eliminatingramification}
Let us further explicate on the nature of ramification on the map $\pi\colon Y_H \to \A^1$. Denote $K := F_{R,N,p}$ for short. Let $\nu(\cdot)$ and $|\cdot|$ denote the valuation and absolute value functions, normalized so $\nu(p)=1$. Denote $\widehat{Y}_{H,b}$ (resp. $\widehat{\A}^1_t$) the formal completions of $Y_H \otimes \calO_{F_{R,N,p}}$ (resp. $\A^1 \otimes \calO_{F_{R,N,p}}$) with respect to the divisors $b$ (resp. $t = 0$). In this part let us assume that the special fiber $Y_{H,\FF_{\mathfrak{p}_N}}$ is \emph{smooth} at $b_p$; in the next subsection we will explain the modifications needed to make our arguments here work for the general case.

\begin{rem}
    The arguments here may be thought of as an effective version of the ``reduced fiber theorem'' (cf. \cite{reducedfibre}) in the simplest case.
\end{rem}

Fix a formal parameter $x$ on $\widehat{Y}_{H,b}$ such that $x(b) = 0$. By the smoothness assumption, $\pi\colon \widehat{Y}_{H,b} \to \widehat{\A}^1_t$ may be described as the vanishing locus of some $f(x) \in \calO_K[[t]][x]$ as $x$ and $t$ are allowed to vary in an open unit disk, i.e. 
$$ f(t,x) := x^N + c_{N-1}(t)x^{N-1} + \cdots + c_1(t)x + c_0(t) = 0\quad(|x|<1,~|t|<1). $$
In particular, on rigid analytic generic fibers, the map $\pi$ is just a map of open unit disks $\mathbf{D}(1,x) \to \mathbf{D}(1,t)$. 

Let $D_{vert} \subset \mathbf{D}(1,x)$ denote the (possibly empty) locus $\pi^{-1}(\pi(b))\backslash \{b\}$, and let $D_{horiz} \subset \mathbf{D}(1,t)$ denote the branch locus of $\pi$ on the target; concretely, $D_{horiz}$ is the $t'$ for which $f(t',x)$ has a multiple root. Let $e :=1+ \# D_{vert}$. Denote the quantities 
$$v_{horiz} := \max_{t' \in D_{horiz}} \nu(t'),\quad v_{vert} := \max_{x' \in D_{vert}} \nu(x').$$
By convention let $v_{horiz} = 0$ if $D_{horiz}$ is empty.

We can now immediately eliminate the horizontal ramification when we shrink the target to a small enough radius. The following is clear from the definitions:
\begin{prop}
    Above the open subdisk $\mathbf{D}(|p^{v_{horiz}}|,t) \subset \mathbf{D}(1,t)$, the map $\pi\colon \widehat{Y}_{H,b} \to \widehat{\A}^1_t$ does not observe any ramification coming from a horizontal divisor of $Y_H$.
\end{prop}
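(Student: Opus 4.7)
The plan is to observe that the proposition is essentially an unwinding of the definitions of $D_{horiz}$ and $v_{horiz}$. The set $D_{horiz}$ was defined to be the horizontal branch locus of $\pi$ on the target's generic fiber $\mathbf{D}(1,t)$, and $v_{horiz}$ was chosen as an upper bound on the valuations of its points. So shrinking the target to an open disk of radius strictly less than $|p^{v_{horiz}}|$ should cut away all of $D_{horiz}$, and thus all horizontal ramification over the preimage.

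First, I would translate ``ramification coming from a horizontal divisor'' into a concrete condition on the fibers of $f(t,x)$. Since $f(t,x)$ is monic of degree $N$ in $x$ with coefficients in $\calO_K[[t]]$, a point $x' \in \mathbf{D}(1,x)$ of the source generic fiber is a ramification point of $\pi$ precisely when $\partial_x f(t',x') = 0$ at $t' := \pi(x')$, i.e.\ when $x'$ is a multiple root of the fiber polynomial $f(t',\,\cdot\,)$. By the very definition of $D_{horiz}$, this happens exactly over $t' \in D_{horiz}$. In other words, the horizontal ramification locus of $\pi$ on the source is $\pi^{-1}(D_{horiz})$.

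Next, I would unpack the valuation estimate: by definition every $t' \in D_{horiz}$ satisfies $\nu(t') \leq v_{horiz}$, equivalently $|t'| \geq |p^{v_{horiz}}|$. Hence $D_{horiz}$ is disjoint from the open disk $\mathbf{D}(|p^{v_{horiz}}|,t)$, and combining with the previous step, no point of $\pi^{-1}(\mathbf{D}(|p^{v_{horiz}}|,t))$ maps into the branch locus. That is exactly the proposition's claim. (In the degenerate case $D_{horiz} = \emptyset$, the convention $v_{horiz} = 0$ reduces the statement to the tautology that $\pi$ has no horizontal ramification on $\mathbf{D}(1,t)$ to begin with.)

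The main obstacle is really no obstacle at all: the result is essentially built into the definitions of $D_{horiz}$ and $v_{horiz}$, and its role is to motivate why $|p^{v_{horiz}}|$ is the natural radius to shrink to. The substantive work lies elsewhere, namely in computing $v_{horiz}$ effectively, in eliminating the remaining vertical ramification over the preimage disk, and in removing the smoothness hypothesis on $Y_{H,\FF_{\mathfrak{p}_N}}$ at $b_p$ — all of which the paper defers to subsequent subsections.
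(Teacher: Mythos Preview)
Your proposal is correct and matches the paper's treatment: the paper simply asserts that this proposition is ``clear from the definitions,'' and your argument is precisely the unwinding of those definitions. Your additional remarks about the degenerate case and about where the real work lies are accurate and in the spirit of the paper's exposition.
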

Eliminating the vertical ramification is harder. We first observe another immediate consequence of the definitions.
\begin{prop}
    The disk $\mathbf{D}(|p^{v_{vert}}|,x)$ does not intersect $D_{vert}$.
\end{prop}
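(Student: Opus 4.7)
The plan is to just unwind the definitions: $\mathbf{D}(|p^{v_{vert}}|, x)$ is the open disk $\{x' : |x'| < |p^{v_{vert}}|\}$, which in terms of the valuation $\nu$ is the locus $\{\nu(x') > v_{vert}\}$. Meanwhile, $v_{vert}$ is defined as the maximum of $\nu(x')$ over $x' \in D_{vert}$, so every element $x' \in D_{vert}$ satisfies $\nu(x') \leq v_{vert}$, and hence lies outside the strict-inequality open disk.

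So the entire argument is: let $x' \in D_{vert}$ be arbitrary. By the definition of $v_{vert}$ as the maximum, $\nu(x') \leq v_{vert}$, equivalently $|x'| \geq |p^{v_{vert}}|$. Hence $x' \notin \mathbf{D}(|p^{v_{vert}}|, x)$. Taking the contrapositive over all $x' \in D_{vert}$ gives the disjointness.

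The only thing one should sanity-check is that the maximum in the definition of $v_{vert}$ is actually attained (so that the inequality $\nu(x') \leq v_{vert}$ is genuine, not a strict supremum). This is fine because $D_{vert} = \pi^{-1}(\pi(b)) \setminus \{b\}$ and $\pi$ is finite of degree $N$, so $D_{vert}$ is a finite set and the maximum over finitely many valuations is achieved. If $D_{vert}$ happens to be empty the statement is vacuous (provided one adopts a sensible convention such as $v_{vert} = -\infty$ analogous to the $v_{horiz}$ convention made just above). There is no real obstacle here; the content of the proposition is merely the setup for the genuinely harder work of bounding the vertical ramification in the subsequent discussion.
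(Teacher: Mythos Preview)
Your proposal is correct and matches the paper's approach: the paper simply states this proposition as ``another immediate consequence of the definitions'' without giving any further argument, and your unwinding of the definition of $v_{vert}$ is exactly that immediate consequence.
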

Now we claim the following.
\begin{prop} \label{prop:onetoone}
    For each $t' \in \mathbf{D}(|p^{e \cdot v_{vert}}|,t)$, there is exactly one point $x' \in \mathbf{D}(|p^{v_{vert}}|,x)$ that maps to $t'$ under $\pi$.
\end{prop}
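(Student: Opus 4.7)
The plan is to analyze the Newton polygon of $f(t', x)$ as a polynomial in $x$, using the factorization of $f(0, x)$ as an anchor.

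Since horizontal ramification has already been eliminated by shrinking the target disk, the $e$ preimages of $0$ in $\mathbf{D}(1, x)$ — namely $0$ (corresponding to $b$) and the points $\xi_2, \ldots, \xi_e \in D_{vert}$ — are simple roots of $f(0, x)$. Hence $\deg_x f = e$ and
$$f(0, x) = x(x - \xi_2) \cdots (x - \xi_e).$$
In particular, the coefficient of $x^1$ in $f(0, x)$ is $a_1 = \pm \xi_2 \cdots \xi_e$, so $\nu(a_1) = \sum_{i=2}^{e} \nu(\xi_i) \leq (e-1)\, v_{vert}$ by the definition of $v_{vert}$ as a max.

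For $t' \in \mathbf{D}(|p^{e \cdot v_{vert}}|, t)$, I would then analyze the Newton polygon of $f(t', x) = x^e + c_{e-1}(t') x^{e-1} + \cdots + c_0(t')$. Since $c_0(0) = 0$, the constant term satisfies $\nu(c_0(t')) \geq \nu(t') > e \cdot v_{vert}$, while for $|t'|$ small enough, $\nu(c_1(t')) = \nu(a_1) \leq (e-1)\, v_{vert}$. Hence the segment of the Newton polygon from $(0, \nu(c_0(t')))$ to $(1, \nu(c_1(t')))$ has slope strictly less than $-v_{vert}$. By the standard Newton polygon correspondence, this segment accounts for a single root of $f(t', x)$ with valuation greater than $v_{vert}$ — namely, the unique $x' \in \mathbf{D}(|p^{v_{vert}}|, x)$ mapping to $t'$. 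The remaining $e-1$ roots lie outside this disk, being perturbations of $\xi_2, \ldots, \xi_e$.

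The main subtlety is confirming that the higher vertices of the Newton polygon of $f(t', x)$ are governed by those of $f(0, x)$ — equivalently, that $\nu(c_i(t')) = \nu(c_i(0))$ for $i \geq 1$ once $\nu(t')$ exceeds the stated threshold — so that no extra roots slip into the small disk from outside. This is a standard ultrametric continuity argument, or, more concretely, Hensel's lemma applied to the coprime factorization $f(0, x) = x \cdot g_0(x)$ with $g_0(x) := (x - \xi_2) \cdots (x - \xi_e)$: this lifts to $f(t', x) = P(t', x)\, Q(t', x)$ with $P$ monic linear in $x$ and $Q$ of degree $e - 1$, and the bound $\nu(t') > e \cdot v_{vert}$ is precisely the convergence threshold that places the unique root of $P(t', x)$ inside $\mathbf{D}(|p^{v_{vert}}|, x)$.
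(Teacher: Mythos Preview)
Your argument is essentially the paper's own: both hinge on the Newton polygon of $f(t',x)$, and your bound $\nu(a_1)=\sum_{i\ge 2}\nu(\xi_i)\le (e-1)\,v_{vert}$ via Vieta is exactly the paper's observation that the first finite vertex of $N_{f,0}$ sits at $(1,v_1)$ with $v_1\le (e-1)\,v_{vert}$, phrased differently. A few points of precision are worth flagging. First, your justification ``since horizontal ramification has already been eliminated'' is not the right reason the $e$ preimages of $t=0$ are simple: no such shrinking has been performed at this stage of the argument, and the simplicity instead follows because $j_E\notin\{0,1728\}$, so the generic fibre of $\pi$ is unramified over $t=0$; this is what simultaneously gives $\deg_x f=e$. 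Second, your claim that $\nu(c_i(t'))=\nu(c_i(0))$ for all $i\ge 1$ is stronger than what is true or needed---the paper is careful to assert this only for those $i$ which are \emph{vertices} of $N_{f,0}$, where one knows $\nu(c_i(0))\le v_1<e\cdot v_{vert}<\nu(t')$; for non-vertex indices the valuation can be much larger, but this is harmless for the polygon. Finally, the Hensel alternative you sketch is shakier than you suggest: the factorization $f(0,x)=x\cdot g_0(x)$ is \emph{not} coprime over the residue field (indeed $g_0(x)\equiv x^{e-1}\bmod\mathfrak m$), so ordinary Hensel does not apply and one would need a quantitative version controlling the resultant---at which point one is essentially redoing the Newton polygon computation.
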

\begin{proof}
    For $t_0 \in \C_p$, let $N_{f,t_0}$ denote the Newton polygon of $f(t_0,x) \in \C_p[x]$. Because $D_{vert} \cup \{b\}$ describes precisely the points above $t=0$ and has cardinality $e$, we find that $(e,0)$ must necessarily be a vertex of $N_{f,0}$, and that all vertices before $(e,0)$ must lie above the horizontal axis. We also note that $v_{vert}$ describes the largest finite slope of $N_{f,0}$. Combining the above two observations together, we find that the first vertex of $N_{f,0}$ is $(1,v_1)$ where crucially we have $v_1 \leq (e-1)\cdot v_{vert}$.

    We have now established that all vertices of $N_{f,0}$ have $y$-coordinate at most $(e-1)\cdot v_{vert}$. Now consider $t$ such that $|t| < |p^{e\cdot v_{vert}}|$. We find two things:
    \begin{itemize}
        \item The Newton polygon $N_{f,t}$ is the exact same as $N_{f,0}$ from $(1,v_1)$ to $(e,0)$. In particular, all $y$-coordinates of $N_{f,0}$ are below $e\cdot v_{vert}$, so replacing $t=0$ with a $t$ satisfying $|t| < |p^{e\cdot v_{vert}}|$ does not change the coefficient valuations of $c_i(t)$, for the $i$ that correspond to vertices of $N_{f,0}$. 

        \item The Newton polygon $N_{f,t}$ has an extra coordinate of the form $(0,e\cdot v_{vert}+\epsilon)$ for some $\epsilon > 0$.
    \end{itemize}
    We thus find that the first edge of $N_{f,t}$, corresponding to the segment between $(0,e\cdot v_{vert}+\epsilon)$ and $(1,v_1)$, has slope strictly greater than $v$. On the other hand, all subsequent edges have slope at most $v$.

    We conclude that for $t' \in \mathbf{D}(|p^{e \cdot v_{vert}}|,t)$, the polynomial $f(t',x) = 0$ has precisely one root $x$ of slope greater than $v$. The lemma follows.
\end{proof}
\begin{cor}
    Above the open subdisk $\mathbf{D}(|p^{e \cdot v_{vert}}|,t) \subset \mathbf{D}(1,t)$, the map $\pi\colon \widehat{Y}_{H,b} \to \widehat{\A}^1_t$ does not observe any ramification coming from a vertical divisor of $Y_H$.
\end{cor}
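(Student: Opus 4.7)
The plan is to read off the corollary as a near-immediate consequence of \cref{prop:onetoone}. The first step is to translate the geometric notion of ``vertical ramification'' into a statement about preimages in the formal model: a vertical ramification event at a target point $t'$ is precisely the phenomenon of two or more of the $e$ generic preimages of $t'$ inside the residue disk around $b$ collapsing in the mod $\mathfrak{p}_N$ reduction; equivalently, it is the existence of two distinct points of $\mathbf{D}(|p^{v_{vert}}|,x)$ whose images under $\pi$ both equal $t'$.

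The second step is then to apply \cref{prop:onetoone} directly: it guarantees that for every $t' \in \mathbf{D}(|p^{e\cdot v_{vert}}|,t)$, the source subdisk $\mathbf{D}(|p^{v_{vert}}|,x)$ contains \emph{exactly} one preimage of $t'$. With a single preimage, no collision is possible, so no target point in $\mathbf{D}(|p^{e\cdot v_{vert}}|,t)$ carries vertical ramification.

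I do not foresee a real obstacle here, since the analytic heavy lifting has already been done in \cref{prop:onetoone} via the Newton polygon argument. The only small point of care is to confirm that the residue-disk collision picture used in \cref{subsec:eliminatingramification} to define vertical ramification matches the ``two-preimages-in-$\mathbf{D}(|p^{v_{vert}}|,x)$'' formulation above, i.e. that the remaining $e-1$ generic preimages of $\pi(b)$---the points comprising $D_{vert}$---all have $x$-valuation at most $v_{vert}$ and hence lie outside the source subdisk. This fact is exactly what the proposition immediately preceding \cref{prop:onetoone} records, so no further work is required to complete the argument.
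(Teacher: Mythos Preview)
Your proposal is correct and matches the paper's treatment: the paper states this corollary with no proof, regarding it as an immediate consequence of \cref{prop:onetoone} together with the preceding proposition that $\mathbf{D}(|p^{v_{vert}}|,x)$ misses $D_{vert}$. Your write-up simply makes explicit the one-line deduction the paper leaves implicit.
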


\begin{cor}
    Let $v_{opt} := \max(v_{horiz},e \cdot v_{vert})$. Above the open subdisk $\mathbf{D}(|p^{v_{opt}}|,t) \subset \mathbf{D}(1,t)$, the map $\pi\colon \widehat{Y}_{H,b} \to \widehat{\A}^1_t$ does not observe any ramification. Namely, we have an isomorphism 
    $$\pi^{-1}\left(\mathbf{D}(|p^{v_{opt}}|,t)\right) \cap \mathbf{D}(1,x) \xrightarrow{\sim} \mathbf{D}(|p^{v_{opt}}|,t).$$
\end{cor}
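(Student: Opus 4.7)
The plan is to chain together the two preceding corollaries with \cref{prop:onetoone}. Since $v_{opt}\geq v_{horiz}$ and $v_{opt}\geq e\cdot v_{vert}$, I would first observe the containment $\mathbf{D}(|p^{v_{opt}}|,t)\subseteq \mathbf{D}(|p^{v_{horiz}}|,t)\cap\mathbf{D}(|p^{e\cdot v_{vert}}|,t)$, so that above this shrunken target disk, neither the horizontal ramification eliminated by the first corollary nor the vertical ramification eliminated by the second corollary can occur. This already gives the first sentence of the claim.

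For the ``namely'' clause, the plan is to produce an explicit rigid-analytic inverse to $\pi$. By \cref{prop:onetoone}, each $t'\in\mathbf{D}(|p^{v_{opt}}|,t)$ has a unique preimage $x'(t')$ in the subdisk $\mathbf{D}(|p^{v_{vert}}|,x)$, while the remaining $e-1$ roots of $f(t',x)$ have valuations in $(0,v_{vert}]$ and thus live in the annulus $\mathbf{D}(1,x)\setminus\mathbf{D}(|p^{v_{vert}}|,x)$, forming the other connected components of the preimage in $\mathbf{D}(1,x)$. The assignment $t'\mapsto x'(t')$ is then rigid-analytic: the Newton polygon of $f(t',x)$ splits cleanly into a degree-$1$ factor (for the central root, separated from the others by the slope gap in \cref{prop:onetoone}) and a degree-$(e-1)$ factor (for the peripheral roots), and by Hensel's lemma this factorization is analytic in $t'\in\mathbf{D}(|p^{v_{opt}}|,t)$. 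So $x'(t')$ is the unique root of an analytically varying linear polynomial in $x$, and the induced section $t'\mapsto x'(t')$ is inverse to $\pi$ on the component of $\pi^{-1}(\mathbf{D}(|p^{v_{opt}}|,t))\cap\mathbf{D}(1,x)$ containing $b$, furnishing the asserted isomorphism.

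The main obstacle is upgrading the pointwise Newton polygon analysis of \cref{prop:onetoone} to a uniform statement that holds as $t'$ varies over the whole target disk, which is what Hensel's lemma needs to yield an analytic (rather than merely pointwise) factorization. Concretely, I would have to check that the gap between the ``very negative'' first slope (of magnitude $>v_{vert}$) and the other slopes (bounded by $v_{vert}$) persists with explicit constants as $t'$ ranges over $\mathbf{D}(|p^{v_{opt}}|,t)$; this is essentially a recheck of the estimates in the proof of \cref{prop:onetoone} with $t'$ treated as an analytic parameter, and should cause no real trouble since $\nu(c_0(t'))>e\cdot v_{vert}$ is preserved uniformly on the target disk. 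Once that uniformity is in hand, the rigid-analytic Hensel's lemma delivers the factorization globally and the section $x'(t')$ is genuinely analytic.
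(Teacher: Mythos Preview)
Your proposal is correct and in fact considerably more detailed than the paper, which states this corollary with no proof at all; it is meant to follow immediately from the two preceding results (elimination of horizontal and of vertical ramification). Your first paragraph---taking the intersection of the two shrunken target disks---is exactly the intended one-line argument.

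Your treatment of the ``namely'' clause is a genuine addition. The paper does not spell out how the pointwise bijection of \cref{prop:onetoone} upgrades to a rigid-analytic isomorphism, and your Hensel/Weierstrass-preparation approach is the right way to do it: the uniform slope gap you isolate (first slope $>v_{vert}$, remaining slopes $\leq v_{vert}$, persisting for all $t'$ in the shrunken disk because $\nu(c_0(t'))>e\cdot v_{vert}$ there) is precisely the hypothesis needed for the rigid-analytic factorization to vary analytically in $t'$. You also correctly flag that the stated isomorphism really holds on the component of the preimage containing $b$; as written, $\pi^{-1}(\mathbf{D}(|p^{v_{opt}}|,t))\cap\mathbf{D}(1,x)$ still contains the $e-1$ peripheral sheets tracking the points of $D_{vert}$, so some restriction (to $\mathbf{D}(|p^{v_{vert}}|,x)$, or to the component through $b$) is implicitly intended. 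This is a point the paper glosses over and you have handled carefully.
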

\subsection{Eliminating ramification: the singular case}
Assume the special fiber $Y_{H,\FF_{\mathfrak{p}_N}}$ is not smooth at $b_p$. The generic fiber of $\widehat{Y}_{H,b}$ will no longer be a unit disk, but we can still embed it into an open unit polydisk, with formal parameters $\mathbf{x} = (x_1, \dots, x_n)$. The statements involving $D_{horiz}$, $v_{horiz}$, $D_{vert}$ and $v_{vert}$ still go through \emph{mutatis mutandis}. 

To finish off, let us use the following trick. First, note that the adapted statements give us an open polydisk $\mathbf{D}(|p^{v_{vert}}|, \mathbf{x}) := \prod\limits_{i=1}^n\mathbf{D}(|p^{v_{vert}}|,x_i)$ disjoint from $D_{vert}$. Next, we may choose a conformal isomorphism $z\colon \mathbf{D}(1,\mathbf{x}) \xrightarrow{\sim} \mathbf{D}(1,\mathbf{y})$ of $n$-dimensional open unit polydisks such that:
\begin{itemize}
    \item The isomorphism maps $\mathbf{D}(|p^{v_{vert}}|, \mathbf{x})$ onto $\mathbf{D}(|p^{v_{vert}}|, \mathbf{y})$.

    \item For each projection $\pi_j\colon \mathbf{D}(1,\mathbf{y}) \to \mathbf{D}(1,y_j)$, the images $(\pi_j \circ z)(\mathbf{D}(|p^{v_{vert}}|, \mathbf{x}))$ and $(\pi_j \circ z)(D_{vert})$ are disjoint from each other.
\end{itemize}
From the above, we obtain $n$ polynomials $f_j(t,y_j) \in \calO_{\C_p}[[t]][y_j]$ satisfying exactly the conditions as in the proof of \cref{prop:onetoone}. So we may again restrict the target of our map $\pi$ to $\mathbf{D}(|p^{e\cdot v_{vert}}|,t)$ and get the same result as before: the map $\pi$ is unramified above $\mathbf{D}(|p^{e\cdot v_{vert}}|,t)$. The ensuing corollaries follow immediately.

\subsection{Effects of eliminating ramification on our power series}
Let $v_{den}$ denote the smallest rational number such that the substitution $u:=t/p^{v_{den}}$ guarantees that $g(t)\,dt =: \tilde{g}(u)\,du$ is $p$-integral. Because we chose our differential $\omega$ to be defined over $\Z$, we have a somewhat stronger than expected condition for $p$-integrality:
\begin{prop}\label{prop:mustcheckallgaloisconjugates}
    We have $v_{den} = \max \{v_{opt}(\sigma(b))\colon \sigma \in \Gal(F_{R,N}/F_R)\}$, the maximum of $v_{opt}$ for points $\sigma(b)$, as $\sigma(b)$ ranges across Galois conjugates of $b$ over $F_R$.
\end{prop}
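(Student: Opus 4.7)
The plan is to combine the local analysis of \cref{subsec:eliminatingramification} with the $\Gal(F_{R,N}/F_R)$-equivariance of $\omega$, and to check both inequalities simultaneously via a Galois-transport argument on valuations.

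First, I would record a strengthening of \cref{subsec:eliminatingramification}: the rational $v_{opt}(b)$ there is in fact the smallest value for which $u = t/p^{v_{opt}(b)}$ extends to an integral formal parameter at $b$ on $\widehat{Y}_{H,b}$. Indeed the Newton-polygon argument in the proof of \cref{prop:onetoone} shows that below this threshold a horizontal or vertical branch point re-enters the target disk and forces a genuine $\mathfrak{p}_N$-adic pole in the inverse series. Since $\omega$ extends to an integral differential on the good-reduction integral model (because it is defined over $\Z$), its expansion in the parameter $u$ is automatically $\mathfrak{p}_N$-integral, yielding the tight bound $\nu_{\mathfrak{p}_N}(c_\ell) \geq -(\ell+1)\,v_{opt}(b)$ for all $\ell$.

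Next, I would apply this verbatim at each Galois conjugate $\sigma(b)$ for $\sigma \in \Gal(F_{R,N}/F_R)$. Because $\omega$ is defined over $\Z$ and $j_E \in F_R$, the parameter $t = j - j_E$ is fixed by $\sigma$ and the expansion of $\sigma(\omega)=\omega$ around $\sigma(b)$ in $t$ is just $\sum_\ell \sigma(c_\ell)\,t^\ell\,dt$. The first step applied at $\sigma(b)$ then gives the tight bound $\nu_{\mathfrak{p}_N}(\sigma(c_\ell)) \geq -(\ell+1)\,v_{opt}(\sigma(b))$. Using the standard identity $\nu_{\mathfrak{p}_N}\circ\sigma = \nu_{\sigma^{-1}(\mathfrak{p}_N)}$, this transports to a tight bound on $\nu_{\sigma^{-1}(\mathfrak{p}_N)}(c_\ell)$. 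As $\sigma$ ranges over $\Gal(F_{R,N}/F_R)$, the primes $\sigma^{-1}(\mathfrak{p}_N)$ sweep out every prime of $F_{R,N}$ above the prime $\mathfrak{p}$ of $F_R$ below $\mathfrak{p}_N$, and the $\Gal(F_R/\Q)$-symmetry of the construction shows that primes of $F_{R,N}$ above any other prime of $F_R$ above $p$ impose only the same bounds, obtained by relabeling the analysis at a $\Gal(F_R/\Q)$-conjugate base embedding.

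Taking the maximum over $\sigma$ then simultaneously forces $v_{den} \geq \max_\sigma v_{opt}(\sigma(b))$ (tightness at every prime above $p$) and achieves $v_{den} \leq \max_\sigma v_{opt}(\sigma(b))$ (sufficiency at every such prime), yielding the claimed equality. The main obstacle I anticipate is the tightness portion of the first step: one must check that the lowest-slope edge of the relevant Newton polygon actually appears in some $c_\ell$ with matching valuation, rather than merely being consistent with the integrality bound, so that a strictly smaller rescaling genuinely breaks integrality. This should follow from the explicit description of the inverse map in the proof of \cref{prop:onetoone}, where poles traceable to $D_{horiz}$ and $D_{vert}$ feed directly into $c_\ell$ for $\ell$ sufficiently large.
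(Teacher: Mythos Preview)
Your approach is essentially the paper's, but spelled out with more care. The paper offers only a one-sentence justification: ``the action of $\sigma \in \Gal(F_{R,N}/F_R)$ on the coefficients of $g(t)\,dt$ preserves the $p$-adic valuations of the $c_\ell$, and so ramification at any one of the conjugates means denominators for all conjugates.'' Your argument unpacks this via the transport identity $\nu_{\mathfrak{p}_N}\circ\sigma = \nu_{\sigma^{-1}(\mathfrak{p}_N)}$ and the observation that $\omega$ and $t=j-j_E$ are both $\Gal(F_{R,N}/F_R)$-invariant (since $\omega$ is defined over $\Z$ and $j_E\in F_R$), so the expansion at $\sigma(b)$ has coefficients $\sigma(c_\ell)$. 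This is exactly the mechanism the paper is gesturing at, made precise; your version has the advantage of making clear that one is really comparing valuations at \emph{different} primes above $p$ rather than asserting literal invariance at a single prime.

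On the tightness direction, you are right to flag it. The paper does not supply a separate argument that $v_{opt}$ is sharp (i.e.\ that a strictly smaller rescaling actually fails integrality for some $c_\ell$); its sentence only argues that ramification at a conjugate forces denominators, not that the stated bound is optimal. So the gap you identify is not a defect of your proposal relative to the paper --- it is a point the paper itself leaves at the level of a heuristic. Your suggested route (tracing the poles from $D_{horiz}$ and $D_{vert}$ through the Newton-polygon inverse into specific $c_\ell$) is a reasonable way to close it, though it would also require knowing that $\omega$ does not vanish at $b$ to higher order than expected.
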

The key point behind \cref{prop:mustcheckallgaloisconjugates} is that the action of $\sigma \in \Gal(F_{R,N}/F_R)$ on the coefficients of $g(t)\,dt$ preserves the $p$-adic valuations of the $c_\ell$, and so ramification at any one of the conjugates means denominators for all conjugates.

In light of \cref{prop:mustcheckallgaloisconjugates}, define the quantities
\begin{align*}
    v_{d,horiz} &:= \max \{v_{horiz}(\sigma(b))\colon \sigma \in \Gal(F_{R,N}/F_R)\} \\
    v_{d,vert} &:= \max \{v_{vert}(\sigma(b))\colon \sigma \in \Gal(F_{R,N}/F_R)\} \\
    e_{den} &:= \max \{e(\sigma(b))\colon \sigma \in \Gal(F_{R,N}/F_R)\}.
\end{align*}

In the next two sections, we will analyze each ramification scenario, and for each relevant prime $\mathfrak{p}|p$ provide an algorithm to compute $v_{den}$. Taking the product of the prime powers $p^{v_{den}}$ for each prime $p$ encountered will then give us $C$.

\section{Denominators coming from vertical ramification} \label{Primes dividing $N$}
Suppose that $p|N$, but that $\mathfrak{p}$ does not divide $j_E(j_E-1728)$. Let $e$, $v_{horiz}$, $v_{vert}$, $v_{den}$, $e_{den}$, $e_{d,horiz}$ and $e_{d,vert}$ be as in \cref{section:ramificationoverview}; note that we necessarily have $v_{horiz} = 0$, so $v_{opt} = e\cdot v_{vert}$. 

\subsection{The Newton polygon attached to the formal group law}
On $E_{\calO, p}$, fix a parameter $T$ at the zero section to obtain a formal group law $\hat{E}$. For $n \in \Z$ let $[n](T) = nT + O(T^2)$ denote the power series induced by the multiplication by $n$ map. Recall that the formal group law has height $h := \text{ht}(\hat{E})$ precisely when $[p](T) \equiv T^{p^h} \,\mod \,(\pi,T^{p^h+1})$.

Write $N$ as $N_0p^m$, where $N_0$ is coprime to $p$. For an ideal $I \leq \calO_{F_{R,N,p}}$, let $E_{\calO,p,I} := E_{\calO,p} \otimes \calO_{F_{R,N,p}}/I$. The Newton polygon of $[p^m](T)$ is then an important invariant controlling the group schemes $E_{\calO,p,I}[N]$; in particular, as $I$ shrinks, the connected part of $E_{\calO,p,I}[N]$ continually cedes rank to the \'etale part. When $I$ is small enough, there is no more connected part, and there, any two $H$-level structures on $E_{\calO,p}$ can be distinguished from each other on $E_{\calO,p,I}$. Our algorithm below finds the largest possible $I$ such that $E_{\calO,p,I}[N]$ has no connected part.

\subsection{Computation of $v_{vert}$}
Recall that $E_{\calO, p}$ has ordinary reduction precisely when $\text{ht}(\hat{E}) = 1$, and supersingular reduction otherwise (in which case $\text{ht}(\hat{E}) = 2$). Let us first handle the ordinary case.
\begin{prop}
    If $E_{\calO, p}$ has ordinary reduction, then $v_{vert}\leq 1/(p-1)$.
\end{prop}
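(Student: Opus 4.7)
The strategy is to reduce the bound on $v_{vert}$ to a statement about valuations of $p^m$-torsion in the formal group $\hat{E}$, then read those off from the Newton polygon of $[p](T)$.

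First I would relate $v_{vert}$ to $p^m$-torsion in $\hat{E}$. Write $N = N_0 p^m$ with $\gcd(N_0, p) = 1$; since $\mathfrak{p}_N \nmid N_0$, the $N_0$-part of $\iota$ is étale and cannot deform within the residue disk of $b$. Hence any $(E, [\iota']_H) \in D_{vert}$ must arise by modifying the $p^m$-part of $\iota$ by a nonzero element of $\ker\bigl(E[p^m] \to E_{\FF_{\mathfrak{p}_N}}[p^m]\bigr)$, which in the ordinary case is exactly $\hat{E}[p^m]$. Using smoothness of $Y_{H,\FF_{\mathfrak{p}_N}}$ at $b_p$, I would identify the fiber $\pi^{-1}(\pi(b)) \cap \widehat{Y}_{H,b}$ with a closed subscheme of a quotient of $\hat{E}[p^m]$ by an action of a subgroup of $H$. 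The formal parameter $x$ on $\widehat{Y}_{H,b}$, restricted to this fiber, should then pull back to a formal parameter $T$ on $\hat{E}$ up to units, so valuations of points of $D_{vert}$ in $x$ agree with their valuations as formal-group points in $T$.

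Second I would bound the $p^m$-torsion valuations in $\hat{E}$ via Newton polygons. Ordinary reduction means $\text{ht}(\hat{E}) = 1$, so $[p](T) \equiv T^p \pmod{(\pi_{R,p}, T^{p+1})}$. Writing $[p](T) = pT + a_2 T^2 + \cdots$, the relevant segment of its Newton polygon runs from $(1, \nu(p)) = (1,1)$ to $(p, 0)$ with slope $-1/(p-1)$; this forces every nonzero element of $\hat{E}[p](\overline{K})$ to have valuation exactly $1/(p-1)$. Iterating $[p^m] = [p] \circ [p^{m-1}]$, the elements of $\hat{E}[p^m]$ of exact order $p^k$ (for $1 \leq k \leq m$) are found to have valuation $1/(p^{k-1}(p-1))$, which is maximized at $k=1$ by $1/(p-1)$. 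Combining with step one yields $v_{vert} \leq 1/(p-1)$.

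The main obstacle is the identification in step one: rigorously matching the formal parameter $x$ on $\widehat{Y}_{H,b}$ with a formal parameter on $\hat{E}$ so that points of $D_{vert}$ embed into $\hat{E}[p^m]$ in a valuation-preserving way. This is essentially a Serre--Tate-style deformation-theoretic calculation leveraging smoothness at $b_p$; once it is in place, the Newton polygon bound immediately gives $v_{vert} \leq 1/(p-1)$.
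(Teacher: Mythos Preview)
Your approach is essentially the same as the paper's. Your step~2 Newton polygon computation---showing that nonzero points of $\hat{E}[p^m]$ have valuation at most $1/(p-1)$, with the maximum attained by $p$-torsion---is exactly the induction the paper carries out (the paper phrases it as bounding the largest finite slope of $[p^m](T)$ and then observes the new roots at level $m$ have slope $\leq 1/(p(p-1))$, which is your formula $1/(p^{k-1}(p-1))$ at $k=m$).

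The one difference is emphasis: you isolate step~1 (identifying $D_{vert}$ with a subset of $\hat{E}[p^m]$ in a valuation-preserving way) as the main obstacle and propose a Serre--Tate argument for it, whereas the paper treats this identification as already set up by the discussion in the subsection immediately preceding the proposition (``as $I$ shrinks, the connected part of $E_{\calO,p,I}[N]$ continually cedes rank to the \'etale part\ldots''). The paper's actual proof then only argues your step~2. So you are being more explicit about a point the paper leaves to context; there is no genuine divergence in method.
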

\begin{proof}
    We claim that the highest finite slope of $[p^m](T)$ is $1/(p-1)$. Induct on $m$. In the base case, note that the Newton polygon of $[p](T)$ has vertices $(1,1)$ and $(p,0)$; in particular, note that there are necessarily no vertices in between $(1,1)$ and $(p,0)$ because there are no proper subgroups of $\Z/p\Z$.

    For the inductive step, assume we have shown the above for $m-1$. We are thus concerning ourselves with roots of $[p^m](T)/[p^{m-1}](T)$. It is equivalent to look at the roots of $[p](T) = \alpha$, where $\alpha$ is a root of $[p^{m-1}](T)/[p^{m-2}](T)$. By the inductive hypothesis, $\nu_p(\alpha) \leq 1/(p-1)$. It follows immediately that since the Newton polygon of $[p](T) = \alpha$ has vertices $(0,\nu_p(\alpha)),(p,0)$, the slopes are bounded above by $1/p(p-1) < 1/(p-1)$.
\end{proof}
Let us next tackle the case when $E_{\FF_{\mathfrak{p}}}$ is supersingular. In this case, the Newton polygon of $[p](T)$ will necessarily have vertices at $(1,1), (p^2-p+1,r),(p^2,0)$ for some $r \in \Q \cap [0,1]$. The reason is because now there is one proper subgroup of $(\Z/p\Z)^2$, namely $\Z/p\Z$, so there is a chance that an additional vertex may appear. Nevertheless, we may apply the same induction process in the proof of the previous proposition, and we find that the highest slope comes from the edge with vertices $(1,1),(p^2-p+1,r)$. From this, we obtain the bound $v_{vert} \leq (1-r)/(p^2-p)$.

Summarizing, we have found the bounds
\begin{align*}
    v_{vert} &\leq \begin{cases}
        \frac{1}{p-1} & E_{\calO,p}~\text{has ordinary reduction} \\
        \frac{1-r}{p^2-p} & E_{\calO,p}~\text{has supersingular reduction}
    \end{cases}
\end{align*}
and where $r$ denotes the $y$-coordinate of the Newton polygon of $[p](T)$ at the $x$-coordinate $p^2-p+1$. Note that both results are independent of embeddings of $F_{R,N}$, and so $v_{vert}$ will be the same for all $\Gal(F_{R,N}/F_R)$-conjugates of $b$, i.e. $v_{d,vert} = v_{vert}$.
\subsection{Description of $e$}
Recall that $e$ denotes the ramification index of $b_p$ above the $j$-line. Letting $N =: N_0p^m$ as above, note that mod $\mathfrak{p}$ reduction of level structures induces a map $\text{pr}\colon H \hookrightarrow \GL_2(\Z/N\Z) \to G_0$, where 
\begin{align*}
    G_0 := \begin{cases}
        \GL_2(\Z/N_0\Z) \times \Hom_{\text{surj}}((\Z/p^m\Z)^2, \Z/p^m\Z) & E_{\FF_{\mathfrak{p}}}~\text{ordinary} \\
        \GL_2(\Z/N_0\Z) & E_{\FF_{\mathfrak{p}}}~\text{supersingular.}
    \end{cases}
\end{align*}
While $G_0$ may not necessarily be a group, it is still a left $H$-set and a right $A_{E_{\FF_{\mathfrak{p}}}}$-set; in particular $H$ still acts by elements of $\GL_2(\Z/N\Z)$ while $A_{E_{\FF_{\mathfrak{p}}}}$ acts by automorphisms of $E_{\FF_{\mathfrak{p}}}[N]$. Refer to e.g. $\text{pr}(H)$ as $\bar{H}$ for short, and refer similarly for elements of $\GL_2(\Z/N\Z)$.

The map $\text{pr}$ then induces a map on level structures $\text{pr}_*\colon H\backslash G/A_{E_{\calO,p}} \to \bar{H}\backslash G_0/A_{E_{\FF_{\mathfrak{p}}}}$, given by $Hg_0A_{E_{\calO,p}} \mapsto \bar{H}\bar{g_0}A_{E_{\FF_{\mathfrak{p}}}}$. Recall that our level structure for $b$ is $[\iota]_H := HgA_E$. We may thus characterize $e$ as the cardinality of the inverse image under $\text{pr}_*$ of the double coset $\bar{H}\bar{g}A_{E_{\FF_{\mathfrak{p}}}}$, that is:
\begin{prop}
    We have $e = \#\text{pr}_*^{-1}(\bar{H}\bar{g}A_{E_{\FF_{\mathfrak{p}}}})$.
\end{prop}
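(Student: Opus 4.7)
The plan is to identify both sides as counts of $H$-level structures on $E$ whose mod-$\mathfrak{p}_N$ reduction agrees with that of $b$. Recall that $e = 1 + \#D_{vert}$, where $D_{vert}$ consists of the points of $\pi^{-1}(\pi(b))$ in the generic fiber of the formal completion $\widehat{Y}_{H,b}$, other than $b$ itself. Since this generic fiber is precisely the $\mathfrak{p}_N$-adic residue disk of $b$, the set $D_{vert} \cup \{b\}$ is exactly the set of points of $Y_H(\overline{K})$ above $j_E$ that reduce to $b_p$.

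Next, since we are under the hypothesis $\mathfrak{p} \nmid j_E(j_E-1728)$ and so $j_E \notin \{0,1728\}$, the curve $E$ is the unique elliptic curve over $\overline{K}$ with $j$-invariant $j_E$, up to isomorphism. The fiber $\pi^{-1}(j_E)(\overline{K})$ is therefore canonically the set of isomorphism classes of $H$-level structures on $E$, which via the chosen Drinfeld basis becomes the double coset space $H \backslash \GL_2(\Z/N\Z) / A_E$, with $b \leftrightarrow HgA_E$. The proposition thus reduces to showing that two double cosets $Hg_1A_E, Hg_2A_E$ reduce to the same $\overline{\FF}_{\mathfrak{p}}$-point of $Y_{H,\FF_{\mathfrak{p}}}$ if and only if they have the same image under $\text{pr}_*$.

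The reduction of a Drinfeld basis $\iota'\colon (\Z/N\Z)^2 \to E[N]$ decomposes according to the factorization $N = N_0 p^m$. The prime-to-$p$ component reduces isomorphically, yielding the $\GL_2(\Z/N_0\Z)$-factor of $G_0$. The $p$-component is governed by the connected-\'etale sequence of $E[p^m]$: in the ordinary case the connected part has trivial \'etale reduction, so $\iota'$ projects mod $\mathfrak{p}_N$ to a surjection $(\Z/p^m\Z)^2 \twoheadrightarrow \Z/p^m\Z$ onto the \'etale quotient, contributing the $\Hom_{\text{surj}}$-factor; in the supersingular case $E[p^m]$ is entirely connected and the $p$-component reduces to zero. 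Passing from bases to double cosets gives exactly $\text{pr}_*$, and the proposition follows.

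The main obstacle is this final identification: one needs to know that the moduli-theoretic reduction recipe really parameterizes the fiber of $Y_{H,\FF_{\mathfrak{p}}} \to \A^1_j$ above $j_E \bmod \mathfrak{p}$, with the automorphism action of $A_{E_{\FF_{\mathfrak{p}}}}$ on $G_0$ matching what is induced from $A_E$. This is where the coarse base change isomorphism $Y_{H,\FF_{\mathfrak{p}}} \cong Y_H \times \FF_{\mathfrak{p}}$ (enabled by $\gcd(6,N)=1$) is essential, so that both generic and special fibers admit the same double-coset description and reduction on level structures is literally computed by $\text{pr}_*$.
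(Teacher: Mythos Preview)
Your argument is correct and follows the same reasoning the paper has in mind: the paper gives no formal proof of this proposition, treating it as immediate from the preceding setup (the definition of $\text{pr}_*$ as mod-$\mathfrak{p}$ reduction of level structures, recorded on double cosets). Your proposal simply makes explicit the steps the paper leaves implicit---linking $e = 1 + \#D_{vert}$ to the fiber count, invoking the connected-\'etale sequence to justify the shape of $G_0$, and noting the role of coarse base change---and these are precisely the ingredients the surrounding text has assembled.
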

Note that this formula for $e$ is valid even when $\mathfrak{p}|j_E(j_E-1728)$.
\begin{exmp}
    If $\mathfrak{p}|j_E$, but $j_E \neq 0,1728$, and $p\nmid N$, then the double coset $HgA_E$ is just $Hg$ (remember that we are supposing $-I \in H$), but modulo $\mathfrak{p}$ it maps to the double coset $HgA_{E_{\FF_{\mathfrak{p}}}}$, which is three times as large as $Hg$. In particular, the cosets that reduce to $HgA_{E_{\FF_{\mathfrak{p}}}}$ are precisely $\{Hg, Hg\alpha, Hg\alpha^2\}$ for a certain order $3$ element $\alpha$.
\end{exmp}

\subsection{Computation of $e_{den}$}
We will now describe how to compute $e_{den}$. If $E_{\FF_{\mathfrak{p}}}$ is supersingular, then the computation of $e$ can be done by simply looping over all possible double cosets; since this is independent of embeddings, we will have $e = e_{den}$ in this case. 

If $E_{\FF_{\mathfrak{p}}}$ is ordinary, then $e$ is now dependent on the positioning of the mod $\mathfrak{p}$ kernel $\ker(E_{\calO,p}[N] \to E_{\FF_{\mathfrak{p}}}[N])$ relative to the basis $[\tau/N,1/N]$ of $E$. However, note that such positioning is dependent on our embeddings of $F_{R,N}$ into $\C$ and $\C_p$, and to compute $e_{den}$ we need only find the maximum across all embeddings. In particular, it is highly likely that the action of $\Gal(F_{R,N}/F_R)$ will send $\ker(E_{\calO,p}[N] \to E_{\FF_{\mathfrak{p}}}[N])$ to every possible order $p^m$ cyclic subgroup of $E[N]$. So to get an upper bound for $e_{den}$, we can compute $\#\text{pr}_*^{-1}(\bar{H}\bar{g}A_{E_{\FF_{\mathfrak{p}}}})$ for all order $p^m$ cyclic subgroups of $E[N]$, and then take the maximum.

\section{Denominators coming from horizontal ramification} \label{Primes dividing $j_E$ or $j_E - 1728$}
Suppose that $\mathfrak{p}|j_E(j_E-1728)$, and leave open the possibility that $p|N$. Let us fix $j_0$ to be either $0$ or $1728$ so that $\mathfrak{p}|j_E-j_0$. As before, let $e$, $v_{horiz}$, $v_{vert}$, $v_{den}$, $e_{den}$, $e_{d,horiz}$ and $e_{d,vert}$ be as in \cref{section:ramificationoverview}. Let $v_{a} := \nu_p(j_E-j_0)$.
\begin{rem} \label{rem:bothj0}
    The prime $\mathfrak{p}$ divides both $j_E$ and $j_E - 1728$ if and only if $p < 5$. If we encounter this scenario, then we simply perform the algorithm below for both values of $j_0$, and then take $v_{horiz}$ to be the maximum of the values found.
\end{rem}
Let $u_{j_0}$ denote a nontrivial automorphism of the elliptic curve over $\C$ with $j$-invariant $j_0$; choose the automorphism so that $u_{j_0}$ has order $n_{j_0}$ and characteristic polynomial $f_{j_0}(T)$, where
\[
    n_{j_0} := \begin{cases}
        3 & \text{if}~j_0 = 0 \\
        4 & \text{if}~j_0 = 1728,
    \end{cases} \qquad f_{j_0}(T) := \begin{cases}
        T^2+T+1 & \text{if}~j_0 = 0 \\
        T^2+1 & \text{if}~j_0 = 1728.
    \end{cases}
\]
\subsection{Description of $v_{d,horiz}$}
We have $v_{horiz} = v_{a}$ if $b_p$ is horizontally ramified , and $v_{horiz} = 0$ otherwise. Similarly, we have $v_{d,horiz} = v_a$ if at least one $\Gal(F_{R,N}/F_R)$-conjugate of $b_p$ is horizontally ramified, and $v_{d,horiz} = 0$ otherwise. So let us first compute $e_{den} \cdot v_{d,vert}$ as in \cref{Primes dividing $N$}. (Note that $v_{d,vert} = 0$ if $p \nmid N$.) If we find that $e_{den} \cdot v_{d,vert} \geq v_{a}$, then we immediately have $v_{den} = e_{den} \cdot v_{d,vert}$. So for the rest of this section, suppose $e_{den} \cdot v_{d,vert} < v_{a}$. Our goal is to determine $v_{d,horiz}$.
\subsection{Computation of $v_{d,horiz}$}
Checking that $v_{horiz} = 0$ amounts to showing $A_{E_{\FF_\mathfrak{p}}} \subseteq g^{-1}Hg$; therefore, by \cref{prop:mustcheckallgaloisconjugates}, checking that $v_{d,horiz} = 0$ amounts to showing $\sigma A_{E_{\FF_\mathfrak{p}}}\sigma^{-1} \subseteq g^{-1}Hg$ for all $\sigma \in \Gal(F_{R,N}/F_R)$. More concretely, we must check that for all $\sigma$, for each applicable $j_0$ as per \cref{rem:bothj0}, and for all order $n_{j_0}$ automorphisms $u_{j_0}$ of $E_{\FF_{\mathfrak{p}}}$, we have $$\sigma u_{j_0} \sigma^{-1} \subseteq g^{-1}Hg.$$

Let $j_0$ be as before. To work with $u_{j_0}$, we must determine its action on the $N$-torsion with respect to the basis $[\tau/N, 1/N]$ for $E[N]$. Since we defined the characteristic polynomial of $\tau$ to be $T^2+rT+s$, we end up with a ring homomorphism $\Z[\tau] \to M_2(\Z/N\Z)$ taking $\tau$ to the matrix $\left[\begin{smallmatrix}
    -r & 1 \\ -s & 0
\end{smallmatrix}\right]$. We would like to see how $\tau$ interacts with $u_{j_0}$, so we must work with $\End(E_{\FF_{\mathfrak{p}}})$.

If $\End(E_{\FF_{\mathfrak{p}}})$ has rank $2$ over $\Z$, then $\tau$ commutes with $u_{j_0}$, and thus there is essentially a unique choice for the matrix corresponding to $u_{j_0}$ (up to at most $\pm I$). Then $\Gal(F_{R,N}/F_R)$ centralizes $u_{j_0}$, and so to show that $v_{d,horiz} =0$ we need only check that $u_{j_0} \in g^{-1}Hg$.

Now assume $\End(E_{\FF_{\mathfrak{p}}})$ has rank $4$ over $\Z$. To wit, let $\Q_{p,\infty}$ be the quaternion algebra over $\Q$ ramified at $p$ and $\infty$, and inside $\Q_{p,\infty}$ fix a maximal order $\Z_{p,\infty}$ (such a choice is unique up to conjugation). By the work of \cite{supersingularEnd}, we may compute an identification $\End(E_{\FF_{\mathfrak{p}}}) \xrightarrow{\sim} \Z_{p,\infty}$. In this way we can identify $\tau$ as an element of $\Z_{p,\infty}$. 

Because $p$ might divide $N$, there is not necessarily a map $\End(E_{\FF_{\mathfrak{p}}}) \to M_2(\Z/N\Z)$. However, we have the following statement, which is a consequence of \cite[Prop. 2.7]{grosszagier}:
\begin{prop}\label{prop:deuringliftnilpotent}
    The endomorphisms $u_{j_0}$ and $\tau$ in $\End(E_{\FF_{\mathfrak{p}}})$ lift uniquely to endomorphisms in $\End(E_{\calO,p,\mathfrak{m}^{v_a}})$. In particular, the natural map $\End(E_{\calO,p,\mathfrak{m}^{v_a}}) \to \End(E_{\FF_{\mathfrak{p}}})$ is an injection.
\end{prop}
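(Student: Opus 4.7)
My plan is to prove existence of the two lifts by separate arguments, and then deduce uniqueness as a special case of an injectivity statement for endomorphism rings in families.

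For $\tau$: the CM action is defined on $E$ over $F_R$ from the outset, so it sits on the generic fiber of $E_{\calO,p}$. Because $E_{\calO,p}$ is a good reduction (abelian scheme) model, N\'eron mapping / rigidity of morphisms between abelian schemes extends $\tau$ to all of $E_{\calO,p}$ and hence to the truncation $E_{\calO,p,\mathfrak{m}^{v_a}}$; this extension reduces to the given $\tau \in \End(E_{\FF_\mathfrak{p}})$.

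For $u_{j_0}$: since $j_E \neq j_0$, no lift can be pulled back from $\End(E_{\calO,p})$ itself. Instead, apply the cited \cite[Prop.~2.7]{grosszagier}, which in the relevant form asserts that for elliptic curves $A, B$ over a complete DVR, an isomorphism $\bar A \xrightarrow{\sim} \bar B$ of special fibers lifts to an isomorphism modulo $\mathfrak{m}^n$ precisely when $\nu(j(A) - j(B)) \geq n$. Take $A = E_{\calO,p}$ and $B = E_{j_0}$, a standard integral model of the elliptic curve with $j$-invariant $j_0$ on which $u_{j_0}$ is already defined. Choose $\bar\iota \colon E_{\FF_\mathfrak{p}} \xrightarrow{\sim} (E_{j_0})_{\FF_\mathfrak{p}}$ so that the pullback of the automorphism on $E_{j_0}$ matches the prescribed $u_{j_0}$. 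Since $\nu_p(j_E - j_0) = v_a$ by definition, the criterion produces a lift $\iota \colon E_{\calO,p,\mathfrak{m}^{v_a}} \xrightarrow{\sim} (E_{j_0})_{\calO,\mathfrak{m}^{v_a}}$, and $\iota^{-1} \circ u_{j_0} \circ \iota$ is the desired element of $\End(E_{\calO,p,\mathfrak{m}^{v_a}})$.

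Injectivity of the reduction map, which gives uniqueness for free, follows from standard rigidity: any non-zero endomorphism of an elliptic scheme over a connected base is a finite isogeny of some positive degree $d$, and $d$ is locally constant in fibers. A non-zero $\phi \in \End(E_{\calO,p,\mathfrak{m}^{v_a}})$ therefore reduces to a degree-$d$ isogeny on the special fiber, and in particular to a non-zero endomorphism. The main obstacle is verifying that the Gross-Zagier criterion applies uniformly in our setting, where the residue characteristic may be $2$ or $3$ and the base DVR $\calO_{F_{R,N,p,(e_p)}}$ can be wildly ramified over $\mathbf{Z}_p$; both subtleties are handled in the cited reference.
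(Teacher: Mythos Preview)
Your proposal is correct and aligns with the paper's approach: the paper gives no proof at all, simply stating that the proposition ``is a consequence of \cite[Prop.~2.7]{grosszagier}''. Your argument unpacks this citation in the natural way---using the Gross--Zagier lifting criterion for $u_{j_0}$, the fact that $\tau$ already lives on the whole integral model for the CM endomorphism, and rigidity of abelian schemes over a connected base for injectivity---so there is nothing substantively different to compare.
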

Moreover, because we supposed $e_{den} \cdot v_{d,vert} < v_{a}$, the $N$-torsion subgroup $E_{\calO,p,\mathfrak{m}^{v_a}}[N]$ has no connected part. As a result, we have:
\begin{prop}
    There is a natural map $\End(E_{\calO,p,(\pi^{v_a})}) \to M_2(\Z/N\Z)$.
\end{prop}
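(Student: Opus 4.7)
The plan is to route the map through the natural action on $N$-torsion. The immediately preceding observation tells us that $E_{\calO,p,(\pi^{v_a})}[N]$ has no connected part, and hence is finite \'etale of rank $N^2$ over its base $\calO_{F_{R,N,p}}/(\pi^{v_a})$. Because $F_{R,N}$ was defined to adjoin the coordinates of $E[N]$, the residue field $\FF_{\mathfrak{p}_N}$ contains the reductions of every $N$-torsion point of $E_{\calO,p}$; therefore each geometric point of the \'etale group scheme $E_{\calO,p,(\pi^{v_a})}[N]$ extends uniquely to a section over $\calO_{F_{R,N,p}}/(\pi^{v_a})$, and the group scheme is in fact constant, isomorphic to $(\Z/N\Z)^2$.

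Concretely, the basis $[\tau/N,\,1/N]$ of $E[N](\calO_{F_{R,N}})$ fixed at the outset reduces modulo $(\pi^{v_a})$ to a basis of $E_{\calO,p,(\pi^{v_a})}[N]$ as a free $\Z/N\Z$-module of rank $2$. Given any $\phi\in\End(E_{\calO,p,(\pi^{v_a})})$, pullback along $\phi$ restricts to a $\Z/N\Z$-linear endomorphism of the $N$-torsion, which in this basis is encoded by a matrix $M_\phi\in M_2(\Z/N\Z)$. The assignment $\phi\mapsto M_\phi$ is a ring homomorphism by the functoriality of passage to $N$-torsion, and this is the desired natural map.

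The main subtle point is the \'etaleness of $E_{\calO,p,(\pi^{v_a})}[N]$; everything after that is a routine functoriality-plus-basis argument. This \'etaleness comes from the Newton polygon computations of \cref{Primes dividing $N$}: our standing hypothesis $e_{den}\cdot v_{d,vert}<v_a$ forces every nonzero $N$-torsion point of $\hat{E}$ (and of every Galois conjugate) to have formal parameter valuation strictly less than $v_a$, so the connected component of $E_{\calO,p}[N]$ vanishes upon reducing modulo $(\pi^{v_a})$. Once this is granted, one may also verify compatibility with \cref{prop:deuringliftnilpotent}, so that the composite of the construction here with the lift $\End(E_{\FF_\mathfrak{p}})\hookleftarrow\End(E_{\calO,p,(\pi^{v_a})})$ agrees with the a priori action of any CM endomorphism on the \'etale $N$-torsion.
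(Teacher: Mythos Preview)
Your proposal is correct and follows the same route the paper intends: the paper does not spell out a proof, but simply states the proposition as an immediate consequence of the preceding observation that $E_{\calO,p,(\pi^{v_a})}[N]$ has no connected part. You have filled in exactly the details one would expect---\'etaleness over the Artinian base, constancy because all $N$-torsion is rational over $\FF_{\mathfrak{p}_N}$, and then the standard action-on-torsion map in the fixed basis $[\tau/N,1/N]$. One tiny wording quibble: ``pullback along $\phi$'' would more naturally be ``the restriction of $\phi$ to $N$-torsion,'' but the meaning is clear.
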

Here is our strategy. We can first solve the equation $f_{j_0}(T) = 0$ in $\Z_{p,\infty}$. Call the resulting solution set $\mathcal{S}_{j_0} \subseteq \Z_{p,\infty}$. Next, for each $u \in \mathcal{S}_{j_0}$, compute a relation between $\tau$ and $u$ inside of $\Z_{p,\infty}$, say
\[
    \alpha_0\tau u = \alpha_1 + \alpha_2\tau + \alpha_3u + \alpha_4u\tau
\]
for $\alpha_0,\dots, \alpha_4 \in \Z$. Finally, use the above relation, as well as $f_{j_0}(u) = 0$, to determine the possible matrices that $u$ can take on inside of $M_2(\Z/N\Z)$. In other words, we can plug $\tau = \left[\begin{smallmatrix}
    -r & 1 \\ -s & 0
\end{smallmatrix}\right]$ into the relation and then use the method of undetermined coefficients to determine all matrices that might correspond to $u$; call this set of matrices $M_u$. 

To summarize the last few paragraphs: letting $\mathscr{O}$ denote the $\Z[\tau]$-algebra $$\mathscr{O} := \dfrac{\Z[\tau]\left<u\right>}{(\alpha_1 + \alpha_2\tau + \alpha_3u + \alpha_4u\tau-\alpha_0\tau u,\, f_{j_0}(u))},$$ we have the commutative diagram
\[
    \begin{tikzcd}
        \mathscr{O} \arrow[r,hookrightarrow] \arrow[d,hookrightarrow]& \End(E_{\calO,p,(\pi^{v_a})}) \arrow[r] \arrow[d,hookrightarrow] & M_2(\Z/N\Z) \\
        \Z_{p,\infty} \arrow[r,"\sim"] & \End(E_{\FF_{\mathfrak{p}}}). & 
    \end{tikzcd}
\]
Denote $M_{j_0}$ the union of all $M_u$ as $u$ ranges across $M_{j_0}$. That is, define $$M_{j_0} := \bigcup\limits_{u \in \mathcal{S}_{j_0}} M_u \subseteq M_2(\Z/N\Z)$$ the set of all possibilities for $u_{j_0}$. We now claim that our computation of $M_{j_0}$ is enough to determine $v_{d,horiz}$:
\begin{prop}
    The set $M_{j_0}$ is invariant under conjugation by $\sigma \in \Gal(F_{R,N}/F_R)$.
\end{prop}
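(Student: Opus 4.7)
The plan is to exploit the fact that $\tau$, being an endomorphism of $E$ defined over $F_R$, commutes with the natural $\Gal(F_{R,N}/F_R)$-action on $E[N]$. Writing the Galois action as a matrix $S_\sigma \in \GL_2(\Z/N\Z)$ with respect to the basis $[\tau/N,1/N]$, and letting $\mathcal{T} := \left[\begin{smallmatrix}-r & 1 \\ -s & 0\end{smallmatrix}\right]$ denote the matrix of $\tau$, the crucial identity is $S_\sigma \mathcal{T} = \mathcal{T} S_\sigma$. In this language, ``conjugation by $\sigma$'' on $M_2(\Z/N\Z)$ is simply the map $v \mapsto S_\sigma v S_\sigma^{-1}$.

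First, I would unwind the definition of $M_u$: it is the set of $v \in M_2(\Z/N\Z)$ satisfying $f_{j_0}(v) = 0$ together with
$$\alpha_0 \mathcal{T} v = \alpha_1 I + \alpha_2 \mathcal{T} + \alpha_3 v + \alpha_4 v \mathcal{T},$$
where the integers $\alpha_i$ are those associated to a particular $u \in \mathcal{S}_{j_0}$. Then I would show that each $M_u$ is individually invariant under $v \mapsto S_\sigma v S_\sigma^{-1}$: if $v \in M_u$ and $v' := S_\sigma v S_\sigma^{-1}$, then $f_{j_0}(v') = S_\sigma f_{j_0}(v) S_\sigma^{-1} = 0$ because $f_{j_0} \in \Z[x]$, and conjugating the second relation by $S_\sigma$---using $S_\sigma \mathcal{T} = \mathcal{T} S_\sigma$ throughout---produces the very same relation with $v$ replaced by $v'$. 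Taking the union over $u \in \mathcal{S}_{j_0}$ then gives the claim for $M_{j_0}$.

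The only real content is justifying $S_\sigma \mathcal{T} = \mathcal{T} S_\sigma$. Because $E$ has CM by $R = \Z[\tau]$ and is defined over the ring class field $F_R$, the endomorphism $\tau$ is itself $F_R$-rational, so the induced operator on $E[N]$ commutes with every $\sigma \in \Gal(F_{R,N}/F_R)$. Reading the identity $\sigma \circ \tau = \tau \circ \sigma$ on $E[N]$ in our chosen basis yields the matrix commutation, after which everything else is a routine verification. The main conceptual point to keep in mind is that the defining relations of $M_u$ involve only $\mathcal{T}$ and the unknown $v$, so their preservation under conjugation reduces entirely to the commutation of $S_\sigma$ with $\mathcal{T}$.
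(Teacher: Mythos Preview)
Your argument is correct and is essentially the same as the paper's: both reduce to showing each $M_u$ is conjugation-invariant, and both derive this from the fact that the image of $\Gal(F_{R,N}/F_R)$ in $\GL_2(\Z/N\Z)$ lies in the centralizer of $\tau$, so that the defining relations of $M_u$ (the equation $f_{j_0}(v)=0$ and the linear relation with $\mathcal{T}$) are preserved under conjugation. Your write-up is slightly more explicit in checking the two relations separately, but the content is identical.
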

\begin{proof}
    We will in fact show that $M_u$ is invariant under conjugation, for each $u \in \mathcal{S}_{j_0}$. By CM theory, the image of $\Gal(F_{R,N}/F_R) \to \GL_2(\Z/N\Z)$ lies inside of the centralizer subgroup $\text{Cent}_{M_2(\Z/N\Z)}(\tau)$. So if we have $u' \in M_u$, then the Galois action takes it to some $u'' = \sigma u' \sigma^{-1}$. But by the centralizer condition, we also have $\tau = \sigma \tau\sigma^{-1}$, meaning that $u''$ and $\tau$ satisfy the exact same relations that $u'$ and $\tau$ did. Hence, $u'' \in M_u$, which completes the proof.
\end{proof}
\begin{cor}
    To show that $v_{d,horiz} = 0$, it is enough to show that the set $M_{j_0}$ is contained inside of $g^{-1}Hg$.   
\end{cor}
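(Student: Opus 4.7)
The plan is to combine the preceding proposition with the characterization of $v_{d,horiz} = 0$ established at the start of this subsection. Recall that $v_{d,horiz} = 0$ is equivalent to the containment $\sigma u_{j_0} \sigma^{-1} \in g^{-1}Hg$ holding for every $\sigma \in \Gal(F_{R,N}/F_R)$, every applicable $j_0$, and every order $n_{j_0}$ automorphism $u_{j_0}$ of $E_{\FF_{\mathfrak{p}}}$, where each such automorphism is identified with a matrix in $M_2(\Z/N\Z)$ via its action on the chosen basis $[\tau/N, 1/N]$. So the target reduces to showing that the full set of such matrices is contained in $M_{j_0}$.

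First I would verify that every matrix representation of $u_{j_0}$ itself lies in $M_{j_0}$. This is essentially by construction: chasing the commutative diagram defining $\mathscr{O}$, the image of any endomorphism $u$ with $f_{j_0}(u) = 0$ in $M_2(\Z/N\Z)$ must satisfy both $f_{j_0}(u) = 0$ and the $\Z$-linear relation with $\tau$ inherited from $\Z_{p,\infty}$; the set $M_u$ was defined to be precisely the matrices solving these relations with $\tau$ plugged in as $\left[\begin{smallmatrix} -r & 1 \\ -s & 0 \end{smallmatrix}\right]$. Hence every matrix representing an order $n_{j_0}$ automorphism of $E_{\FF_\mathfrak{p}}$ lies in $M_u \subseteq M_{j_0}$ for some $u \in \mathcal{S}_{j_0}$.

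Next I would invoke the previous proposition: $M_{j_0}$ is closed under the conjugation action of $\Gal(F_{R,N}/F_R)$. Together with the previous step, this forces $\sigma u_{j_0}\sigma^{-1} \in M_{j_0}$ for all $\sigma$ and all $u_{j_0}$. Consequently, if one verifies $M_{j_0} \subseteq g^{-1}Hg$, then every conjugate appearing in the characterization of $v_{d,horiz}=0$ lies in $g^{-1}Hg$, and the corollary follows.

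The main subtlety, rather than a genuine obstacle, is ensuring completeness of $M_{j_0}$: one must check that every order $n_{j_0}$ automorphism of $E_{\FF_\mathfrak{p}}$ really does arise as the image of some $u \in \mathcal{S}_{j_0}$ under the isomorphism $\End(E_{\FF_\mathfrak{p}}) \xrightarrow{\sim} \Z_{p,\infty}$, and that its induced matrix in $M_2(\Z/N\Z)$ is captured by $M_u$. The first point is the defining property of $\mathcal{S}_{j_0}$; the second uses \cref{prop:deuringliftnilpotent} together with the running assumption $e_{den}\cdot v_{d,vert} < v_a$, which guarantees that $E_{\calO,p,\mathfrak{m}^{v_a}}[N]$ has no connected part and so the reduction map on $N$-torsion is compatible with the relations cutting out $M_u$.
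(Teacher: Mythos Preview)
Your proposal is correct and matches the intended argument: the paper states this as an immediate corollary with no separate proof, and the reasoning you spell out---that every matrix realization of an order $n_{j_0}$ automorphism lies in $M_{j_0}$ by construction, and that the preceding proposition then forces all Galois conjugates to remain in $M_{j_0}$---is exactly the inference the reader is expected to make. Your final paragraph on the completeness of $M_{j_0}$ is a useful elaboration of something the paper leaves implicit in the commutative diagram and the standing hypothesis $e_{den}\cdot v_{d,vert} < v_a$.
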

\subsection{Summary of the algorithm}
Based on the discussion above, here is the algorithm to compute $v_{den}$ if $\mathfrak{p}$ divides $j_E(j_E-1728)$.
\begin{itemize}
    \item If $p|N$, compute $e_{den} \cdot v_{d,vert}$ as in \cref{Primes dividing $N$}. If $e_{den} \cdot v_{d,vert} \geq v_a := \max\{\nu_p(j_E),\nu_p(j_E-1728)\}$, output $v_{den} = e_{den} \cdot v_{d,vert}$.
    \item Let $j_0 \in \{0,1728\}$ be such that $\mathfrak{p}|j_E-j_0$. If both $j_0 = 0$ and $j_0 = 1728$ satisfy this divisibility, then repeat the steps below for both these values.
    \item Using \cite{CmEnd}, compute the endomorphism of $E_0$ associated to $\tau$, and using the map $E_0 \to E_{\calO,p}$, identify $\tau$ as an endomorphism of $E_{\FF_{\mathfrak{p}}}$.
    \item Using \cite{supersingularEnd}, identify the endomorphism $\tau$ as an element of $\Z_{p,\infty}$.
    \item Compute $\mathcal{S}_{j_0} \subset \Z_{p,\infty}$, the solutions in $\Z_{p,\infty}$ to the characteristic polynomial $f_{j_0}(T) = 0$.
    \item For each $u \in \mathcal{S}_{j_0}$, find a relation $\alpha_1 + \alpha_2\tau + \alpha_3u + \alpha_4u\tau-\alpha_0\tau u = 0$.
    \item Letting $T^2+rT+s$ denote the characteristic polynomial of $\tau$, substitute $\tau =\left[\begin{smallmatrix}
    -r & 1 \\ -s & 0
    \end{smallmatrix}\right]$ into the relation, and then use the method of undetermined coefficients to solve for the possible solutions $M_u \subset M_2(\Z/N\Z)$.
    \item Compute $M_{j_0} = \bigcup\limits_{u \in \mathcal{S}_{j_0}} M_u$.
    \item If $M_{j_0} \subset g^{-1}Hg$, then output $v_{den} = 0$. Else, output $v_{den} = v_a$.
\end{itemize}
\section{The cases $j_E \in \{0, 1728\}$} \label{section:0or1728}
If we have chosen $b_\infty$ such that $j_E \in \{0,1728\}$, then as long as $b_\infty$ is not ramified over $\pi\colon Y_H \to \A^1$, we can still run the above algorithms. If $b_\infty$ is ramified, however, then we will have to pass to a larger cover. Here is how to do so. First, let $\widehat{H} \leq \GL_2(\hat{\Z})$ denote the inverse image of $H$ under the surjection $\GL_2(\hat{\Z}) \twoheadrightarrow \GL_2(\Z/N\Z)$, and let $\Gamma(3) \leq \GL_2(\hat{\Z})$ denote the full level $3$ subgroup. Then, instead of $\pi$, we may look at the map $\pi'\colon Y_{\widehat{H} \cap \Gamma(3)} \to Y_{\Gamma(3)} \cong \A^1$. Any modular curve containing a full level $3$ structure is representable, so coarse base change will hold (even though the level is now divisible by $3$). The $j$-invariant will now be replaced with $j^{1/3}$, a cube root of $j$. The analysis will proceed the exact same as detailed in this article, and in fact some things will get easier: since $\Gamma(3)$ is representable, all automorphism groups $A_E$ that appear will simply be trivial. Representability also implies that we need only worry about vertical ramification.

\appendix \section{Rigorous determination of an algebraic integer} \label{alg:algintegerrecovery}
In this section we detail an algorithm to determine an algebraic integer in a prescribed number field given numerical approximations to its Galois conjugates. We also give a speedup involving a discrete Fourier transform if we know the Galois group is abelian. Let $L/K$ be a degree $d$ Galois extension of number fields. Let $\{\sigma_0, \dots, \sigma_{d-1}\}$ denote an ordering of the elements of $\Gal(L/K)$; for convention we suppose $\sigma_0$ is the trivial element. Let $\iota\colon L \hookrightarrow L_v$ denote an embedding of $L$ into its completion at a fixed place $v \leq \infty$, and let $K_w \subseteq L_v$ denote the corresponding completion of $K$. We will make the following assumption:
\begin{quote}\label{quote:efficiency-assumption}
    It is efficient to recover an algebraic integer $\gamma$ in $\calO_K$ relative to the accuracy of an approximation $\tilde{\gamma}$ inside of $K_w \subseteq L_v$.
\end{quote}
In fact, our primary use case will be when $K$ is an imaginary quadratic field. So we will always have efficient approximations.
\subsection{The algorithm}
Suppose we have determined a vector $\mathbf{w} := [\gamma_0, \dots, \gamma_{d-1}]^T \in {\calO}^{\oplus d}_{L_v}$ to enough precision, and we know that the $\gamma_j$ are approximations to conjugates of some element $\gamma \in \calO_L$. Namely, suppose we have $\sigma_j(\gamma_1) \approx \gamma_j$ for each $0 \leq j < d$. Our goal is to find $\gamma$.

First, construct a normal basis $\{\sigma_0(\alpha), \sigma_1(\alpha), \dots, \sigma_{d-1}(\alpha)\}$ of $L/K$ such that their $\calO_K$-span contains $\calO_L$. Form the $d \times d$ matrix $\mathbf{A} := [A_{ij}]_{0\leq i,j < d}$ such that the entry $A_{ij}$ equals $\iota(\sigma_{i-j}(\alpha)) \in L_v$. Next, observe that if we had $\mathbf{w}_0 = [\gamma, \sigma_1(\gamma), \dots, \sigma_{d-1}(\gamma)]^T$ on the nose, then a solution $\mathbf{v}_0 := [v_0, \dots, v_{d-1}]^T \in \calO_K^{\oplus d}$ to $\mathbf{A}\mathbf{v}_0 = \mathbf{w}_0$ would express $\gamma$ as a $\calO_K$-linear combination $v_0\alpha + v_1\sigma_1(\alpha) + \cdots + v_{d-1}\sigma_{d-1}(\alpha)$ of our normal basis. So let us compute $\mathbf{v} = \mathbf{A}^{-1}\mathbf{w}$. We end up with $\mathbf{v}_0 := [\tilde{v}_0, \dots, \tilde{v}_{d-1}]^T \in L_v^{\oplus d}$, and now by our efficiency assumption, we can recover an algebraic integer $v_j \in \calO_K$ from $\tilde{v}_j$, for each $0 \leq j < d$. 

We output $\gamma = v_0\alpha + v_1\sigma_1(\alpha) + \cdots v_{d-1}\sigma_{d-1}(\alpha) \in \calO_L$.

\subsection{Speedup for abelian extensions}
If we know $G := \Gal(L/K)$ is abelian, then we may make the following speedup. First, write $G$ in Smith normal form i.e. $G = \Z/d_1\Z \oplus \Z/d_2\Z \oplus \cdots \oplus \Z/d_k\Z$, where we have the chain of divisor relations $d_1|d_2|\cdots|d_k$. For each $1 \leq i \leq k$, let $g_i$ be a generator for the direct summand $\Z/d_i\Z$. So we may form $k$-dimensional arrays with dimensions $d_1 \times d_2 \times \cdots \times d_k$. Concretely, let $\mathbf{a}$ and $\mathbf{w}_1$ denote arrays with those dimensions such that $\mathbf{a}[i_1, \dots, i_k]$ equals $(\iota \circ (i_1g_1 + \cdots + i_kg_k))(\alpha)$, and such that $\mathbf{w}_1[i_1, \dots, i_k]$ equals the computed ``approximation'' of $(i_1g_1 + \cdots + i_kg_k))(\gamma)$. Consequently, we may rewrite the equation $\mathbf{A}\mathbf{v}_0 = \mathbf{w}_0$ as $\mathbf{a} \ast \mathbf{v}_0 = \mathbf{w}_1$, where the $\ast$ denotes convolution:
\begin{align*}
    (\mathbf{x} \ast \mathbf{y})[m_1, \dots, m_k] := \sum_{i_1=0}^{d_1-1}\cdots \sum_{i_k=0}^{d_k-1} \mathbf{x}[i_1,\dots, i_k]\cdot \mathbf{y}[m_1-i_1,\dots, m_k-i_k].
\end{align*}

Fix a $d_k$-th root of unity $\mu_{d_k} \in \bar{L}_v$, and let $L_{v}(\zeta)$ denote the field extension obtained by adjoining $\mu_{d_k}$ to $L_v$. For $k' \leq k$, let $\mu_{d_{k'}} := \mu_{d_k}^{d_k/d_{k'}}$. Let $\mathcal{F}_G\colon L_v(\zeta)^{\oplus (d_1\times \cdots \times d_k)} \to L_v(\zeta)^{\oplus (d_1\times \cdots \times d_k)}$ denote the multidimensional \emph{discrete Fourier transform}, given by 
\begin{align*}
    \calF_G(\mathbf{x})[m_1, \dots, m_k] &:= \sum_{i_1=0}^{d_1-1}\cdots \sum_{i_k=0}^{d_k-1} \mathbf{x}[i_1, \dots, i_k]\mu_{d_1}^{-i_1m_1}\cdots \mu_{d_k}^{-i_km_k}.
\end{align*}
Of course, this map has an inverse:
\begin{align*}
    \calF_G^{-1}(\mathbf{X})[m_1, \dots, m_k] &:= \frac{1}{\# G}\sum_{i_1=0}^{d_1-1}\cdots \sum_{i_k=0}^{d_k-1} \mathbf{X}[i_1, \dots, i_k]\mu_{d_1}^{i_1m_1}\cdots \mu_{d_k}^{i_km_k}.
\end{align*}
Because $\calF_G$ turns convolution into pointwise multiplication, the equation $\mathbf{a} \ast \mathbf{v}_0 = \mathbf{w}_1$ turns into $\calF_G(\mathbf{a})\calF_G(\mathbf{v}_0) = \calF_G(\mathbf{w}_1)$. Thus, we have $\mathbf{v}_0 := \calF_G^{-1}\left(\calF_G(\mathbf{w}_1)/\calF_G(\mathbf{a})\right)$ where the division is done pointwise. We may recover an algebraic integer $v(m_1, \dots, m_k) \in \calO_K$ from each entry $\mathbf{v}_0[m_1, \dots, m_k]$.

We output $\gamma = \sum\limits_{i_1=0}^{d_1-1}\cdots \sum\limits_{i_k=0}^{d_k-1} v(i_1, \dots, i_k) \cdot (i_1g_1 + \cdots + i_kg_k)(\alpha) \in \calO_L.$
\bibliographystyle{amsalpha}
\bibliography{source.bib}

\end{document}